\newtheorem{thm}{Theorem}
\newtheorem{cor}{Corollary}
\newtheorem{obs}{Observation}
\newtheorem{lem}{Lemma}
\newtheorem{Problem}{Problem}
\theoremstyle{remark}
\newcommand{\pn}{{\rm pn}}
\newcommand{\barv}{\overline{v}}
\newcommand{\adom}{\gamma_{\rm a}}
\newcommand{\cF}{{\cal F}}
\newcommand{\cP}{{\cal P}}
\newcommand{\cAa}{{\cal A_{\gamma_{\rm a}}}}
\newcommand{\cAg}{{\cal A_{\gamma}}}
\newcommand{\1}{\vspace{0.04cm}}
\newcommand{\2}{\vspace{0.1cm}}
\let\oldenumerate\enumerate
\renewcommand{\enumerate}{
  \oldenumerate
  \setlength{\itemsep}{0.5pt}
  \setlength{\parskip}{0pt}
  \setlength{\parsep}{0pt}
}
\begin{document}

\title{On Accurate Domination in Graphs}
\author{$^{1}$Joanna Cyman, \, $^{2}$Michael A. Henning\thanks{Research
supported in part by the South African
National Research Foundation and the University of Johannesburg}, \, and \, $^{3}$Jerzy Topp\\
\\
$^1$Faculty of Applied Physics and Mathematics \\
Gda\'nsk University of Technology,  80-233 Gda\'nsk, Poland \\
\small {\tt Email: joanna.cyman@pg.edu.pl} \\
\\
$^2$Department of Pure and Applied Mathematics\\
University of Johannesburg \\
Auckland Park 2006, South Africa \\
\small {\tt Email: mahenning@uj.ac.za} \\
\\
$^3$Faculty of Mathematics, Physics and Informatics \\
University of Gda\'nsk, 80-952 Gda\'nsk, Poland \\
\small {\tt Email: jtopp@inf.ug.edu.pl}
}

\date{}
\maketitle

\begin{abstract}
A dominating set of a graph $G$ is a subset $D \subseteq V_G$ such that every vertex not in $D$ is adjacent to at least one vertex in $D$. The cardinality of a smallest dominating set of $G$, denoted by $\gamma(G)$, is the domination number of $G$. The accurate domination number of $G$, denoted by $\gamma_{\rm a}(G)$, is the cardinality of a smallest set $D$ that is a dominating set of $G$ and no $|D|$-element subset of $V_G \setminus D$ is a dominating set of $G$. We study graphs for which the accurate domination number is equal to the domination number. In particular, all trees $G$ for which $\gamma_{\rm a}(G) = \gamma(G)$ are characterized. Furthermore, we compare the accurate domination number with the domination number of different coronas of a graph.
\end{abstract}

{\small \textbf{Keywords:} Domination number; Accurate domination number;  Tree; Corona.} \\
\indent {\small \textbf{AMS subject classification: 05C69; 05C05; 05C75; 05C76.}}

\newpage
\section{Introduction and notation}

We generally follow the notation and terminology of \cite{ChartrandLesniakPing Zhang} and \cite{Haynes...Slater}. Let $G = (V_G,E_G)$ be a~graph with vertex set $V_G$ of order~$n(G) = |V_G|$ and edge set $E_G$ of size~$m(G) = |E_G|$. If $v$ is a~vertex of $G$, then the \emph{open neighborhood} of $v$ is the set $N_G(v)=\{u\in V_G\colon uv\in E_G\}$, while the \emph{closed neighborhood} of $v$ is the set $N_G[v]=N_G(v)\cup\{v\}$. For a subset $X$ of $V_G$ and a vertex $x$ in $X$, the set $\pn_G(x,X) = \{v \in V_G \mid N_G[v] \cap X = \{x\}\}$
%PN_G[x,X]
%$\pn_G(x,X) =
is called the \emph{$X$-private neighborhood} of the vertex $x$, and it consists of those vertices of $N_G[x]$ which are not adjacent to any vertex in $X \setminus \{x\}$; that is, $\pn_G(x,X) = N_G[x] \setminus N_G[X\setminus\{x\}]$. The \emph{degree} $d_G(v)$ of a~vertex $v$ in~$G$ is the number of vertices in $N_G(v)$. A vertex of degree one is called a \emph{leaf} and its neighbor is called a \emph{support vertex}. The set of leaves of a graph $G$  is denoted by $L_G$, while the set of support vertices by $S_G$. For a set $S\subseteq V_G$, the subgraph induced by $S$ is denoted by $G[S]$, while the subgraph induced by $V_G \setminus S$ is denoted by $G-S$. Thus the graph $G - S$ is obtained from $G$ by deleting the vertices in $S$ and all edges incident with $S$.  Let $\kappa(G)$ denote the number of components of~$G$.

A \emph{dominating set} of a graph $G$ is a subset $D$ of $V_G$ such that every vertex not in $D$ is adjacent to at least one vertex in $D$, that is, $N_G(x)\cap D\ne \emptyset$ for every $x\in V_G \setminus D$. The \emph{domination number} of $G$, denoted by $\gamma(G)$, is the cardinality of a smallest dominating set of~$G$. An \emph{accurate dominating set} of~$G$  is a dominating set $D$ of $G$ such that no $|D|$-element subset of $V_G \setminus D$ is a~dominating set of $G$. The \emph{accurate domination number} of $G$, denoted by $\adom(G)$, is the cardinality of a smallest accurate dominating set of $G$. We call a dominating set of $G$ of cardinality $\gamma(G)$ a $\gamma$-\emph{set of $G$}, and an accurate dominating set of $G$ of cardinality $\adom(G)$ a $\adom$-\emph{set of $G$}. Since every accurate dominating set of $G$ is a dominating set of $G$, we note that $\gamma(G)\le \adom(G)$. The accurate domination in graphs was introduced by Kulli and Kattimani  \cite{KulliKattimani}, and further studied in a number of papers. A comprehensive survey of concepts and results on domination in graphs can be found in
\cite{Haynes...Slater}.

We denote the path and cycle on $n$ vertices by $P_n$ and $C_n$, respectively. We denote by $K_n$ the \emph{complete graph} on $n$ vertices, and by $K_{m,n}$ the \emph{complete bipartite graph} with partite sets of size~$m$ and $n$. The accurate domination numbers of some common graphs are given by the following formulas:

\begin{obs}
\label{formula}
The following holds.
\\[-26pt]
\begin{enumerate}
\item For $n \ge 1$, $\adom(K_n)= \lfloor \frac{n}{2} \rfloor + 1$ and $\adom(K_{n,n})= n + 1$. \1
\item For $n > m \ge 1$, $\adom(K_{m,n}) = m$.  \1
\item For $n \ge 3$, $\adom(C_n)= \lfloor \frac{n}{3} \rfloor - \lfloor \frac{3}{n} \rfloor+2$. \1
\item  For $n \ge 1$, $\adom(P_n)= \lceil \frac{n}{3} \rceil$ unless $n \in \{2,4\}$ when $\adom(P_n)= \lceil \frac{n}{3} \rceil + 1$
     {\rm (see Corollary~\ref{wniosek-sciezki})}.
\end{enumerate}
\end{obs}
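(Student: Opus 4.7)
The strategy for each of the four parts is the same: we (i)~exhibit an accurate dominating set of the claimed cardinality, and (ii)~show that every smaller dominating set is not accurate by producing a dominating set of equal cardinality inside its complement.

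Parts (a), (b), (c) are essentially counting. In $K_n$ every non-empty subset is dominating, so a dominating set $D$ is accurate iff $|V_{K_n} \setminus D| < |D|$, which first happens at $|D| = \lfloor n/2 \rfloor + 1$. In $K_{n,n}$ a dominating set either meets both partite sets or contains one of them entirely; taking one whole partite set plus a single extra vertex gives an accurate dominating set of size $n+1$ (its complement has size $n-1$). Conversely, for any dominating $D$ with $|D| \le n$, either $D$ meets both partite sets, in which case we can pick one vertex from each side of $V_{K_{n,n}} \setminus D$ and extend to a dominating $|D|$-subset of $V_{K_{n,n}} \setminus D$, or $|D|=n$ and $D$ is one whole side, in which case the opposite side is itself a dominating set of the same cardinality. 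For $K_{m,n}$ with $n > m$, the smaller partite set is an accurate dominating set of size $m$ because any $m$-subset of the larger side leaves the remaining $n-m \ge 1$ vertices of that side undominated; and for $|D| < m$ the same ``one-of-each-side'' argument produces a dominating $|D|$-subset of $V_{K_{m,n}} \setminus D$.

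For the upper bound in (d), label $V_{C_n} = \{v_0, \ldots, v_{n-1}\}$ cyclically and take $D$ to consist of $\{v_{n-1}, v_0, v_1\}$ together with a minimum dominating set of the subpath $v_3 v_4 \cdots v_{n-3}$ (empty when $n=4$, of size $\lceil (n-5)/3 \rceil$ otherwise). A direct count yields $|D| = \lfloor n/3 \rfloor + 2$, and $D$ dominates $C_n$. Since $N_{C_n}[v_0] = \{v_{n-1}, v_0, v_1\} \subseteq D$, the vertex $v_0$ has no neighbor in $V_{C_n} \setminus D$, so $V_{C_n} \setminus D$ is not a dominating set and neither is any of its subsets; hence $D$ is accurate. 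The boundary case $n=3$ is immediate, since then any two-vertex set has a one-vertex complement, which is too small to contain a dominating set of size $2$.

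The main obstacle is the matching lower bound in (d): we must show that every dominating set $D$ of $C_n$ with $|D| \le \lfloor n/3 \rfloor + 1$ admits a dominating set of $C_n$ of size $|D|$ inside $V_{C_n} \setminus D$. A dominating set of $C_n$ is determined by its cyclic sequence of gaps between consecutive chosen vertices (each gap of size at most $2$), and the argument splits by $n \bmod 3$ together with the gap pattern of $D$. For $n \equiv 0 \pmod 3$ the minimum dominating sets are the three shifts of $\{v_0, v_3, v_6, \ldots\}$, each of whose complement contains the other two shifts; for the intermediate size $n/3 + 1$ an explicit shift again produces a dominating set inside the complement. For $n \equiv 1, 2 \pmod 3$ the minimum dominating sets have one or two short gaps, and in each case a suitable shift of $D$ yields a dominating set of size $|D|$ inside $V_{C_n} \setminus D$. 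Part (d) for paths is established separately in Corollary~\ref{wniosek-sciezki}, using a similar strategy adapted to the two endpoints and accounting for the exceptional values $n \in \{2, 4\}$.
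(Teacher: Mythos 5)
The paper states this observation without proof (only the path formula is justified later, via Corollary~\ref{wniosek-sciezki}), so there is no in-paper argument to measure you against; on its own terms, your treatment of parts (a) and (b) is correct and complete, and your upper bound for cycles (which you mislabel as part (d) --- cycles are part (c)) is also sound: $|D| = 3 + \lceil (n-5)/3 \rceil = \lfloor n/3 \rfloor + 2$ and $N_{C_n}[v_0] \subseteq D$ forces $D$ to be accurate.

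The genuine gap is in the lower bound for cycles. You assert that every dominating set $D$ of $C_n$ with $|D| \le \lfloor n/3 \rfloor + 1$ has a \emph{shift} that lands inside $V_{C_n} \setminus D$. That is true whenever $D$ has no two adjacent vertices (all gaps positive, so the shift by one works), and for $n \equiv 1 \pmod 3$ with gap pattern $(0,2,\dots,2)$ the shift by two works; but it fails for $n \equiv 0 \pmod 3$, $|D| = n/3 + 1$, gap pattern $(0,1,2,\dots,2)$. Concretely, for $D = \{v_0,v_1,v_3\}$ in $C_6$ the differences $d-d' \pmod 6$ with $d,d' \in D$ exhaust all of $\mathbb{Z}_6$, so \emph{every} rotation of $D$ meets $D$; the same happens for $\{v_0,v_1,v_3,v_6\}$ in $C_9$. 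The conclusion survives, but needs a different witness: with gap pattern $(0,1,2,\dots,2)$ the consecutive vertices of $D$ differ by $1$, $2$ or $3$, exactly once each by $1$ and by $2$, so the vertices of $D$ occupy only two residue classes modulo $3$; hence $V_{C_n}\setminus D$ contains an entire residue class, which is a $\gamma$-set of size $n/3$, and since $|V_{C_n}\setminus D| = 2n/3 - 1 > n/3$ one may add a further complement vertex to obtain a dominating $(n/3+1)$-subset of $V_{C_n}\setminus D$. With that subcase repaired (and the positive-gap and $(0,2,\dots,2)$ cases actually verified rather than asserted), your case analysis by $n \bmod 3$ and gap pattern does establish part (c).
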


In this paper we study graphs for which the accurate domination number is equal to the domination number. In particular, all trees $G$ for which $\adom(G)= \gamma(G)$ are characterized. Furthermore, we compare the accurate domination number with the domination number of different coronas of a graph. Throughout the paper, we use the symbol $\cAg(G)$ (respectively, $\cAa(G)$) to denote the set of all minimum dominating sets (respectively, minimum accurate dominating sets) of $G$.

\section{Graphs with  $\adom$ equal to $\gamma$}

We are interested in determining the structure of graphs for which the accurate
domination number is equal to the domination number. The question about such graphs has been stated in \cite{KulliKattimani}. We begin with the following general property of the graphs $G$ for which $\adom(G)= \gamma(G)$.

\begin{lem}
\label{twierdzenie1}
Let $G$ be a graph. Then $\adom(G)=\gamma(G)$ if and only if there exists a~set $D \in \cAg(G)$ such that $D \cap D' \ne  \emptyset$ for every set $D' \in \cAg(G)$.
\end{lem}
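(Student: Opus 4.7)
The plan is to prove both directions by translating the definition of an accurate dominating set of size $\gamma(G)$ into a statement about the intersection pattern of minimum dominating sets. The key observation driving everything is this: if $|D| = \gamma(G)$, then any $|D|$-element dominating subset of $V_G$ is itself a $\gamma$-set. Consequently, the condition ``no $|D|$-element subset of $V_G \setminus D$ is a dominating set of $G$'' collapses to ``no $\gamma$-set of $G$ is contained in $V_G \setminus D$,'' which is in turn equivalent to saying that $D$ meets every member of $\cAg(G)$.

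For the forward direction, I would assume $\adom(G) = \gamma(G)$ and pick a minimum accurate dominating set $D$. Since $|D| = \adom(G) = \gamma(G)$ and $D$ is dominating, $D \in \cAg(G)$. For any $D' \in \cAg(G)$ we have $|D'| = \gamma(G) = |D|$; if $D' \cap D = \emptyset$, then $D' \subseteq V_G \setminus D$ is a $|D|$-element dominating subset of $V_G \setminus D$, contradicting the fact that $D$ is accurate. Hence $D \cap D' \ne \emptyset$ for every $D' \in \cAg(G)$, as required.

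For the backward direction, suppose there exists $D \in \cAg(G)$ with $D \cap D' \ne \emptyset$ for every $D' \in \cAg(G)$. I would argue that $D$ is itself an accurate dominating set. Assume, for contradiction, that some $|D|$-element subset $S \subseteq V_G \setminus D$ is a dominating set of $G$. Since $|S| = |D| = \gamma(G)$ and $S$ dominates $G$, the set $S$ is a minimum dominating set, i.e., $S \in \cAg(G)$. But $S \cap D = \emptyset$, contradicting the hypothesis. Hence $D$ is an accurate dominating set, giving $\adom(G) \le |D| = \gamma(G)$; combined with the always-valid inequality $\gamma(G) \le \adom(G)$, we get $\adom(G) = \gamma(G)$.

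There is no real obstacle here: the whole lemma is a reformulation, and the only subtlety is the ``size match'' observation that when $|D|=\gamma(G)$, a $|D|$-element dominating subset of $V_G \setminus D$ is forced to be a $\gamma$-set. I would make sure to state that observation explicitly before starting the two directions, so that each direction reduces to a one-line contrapositive argument.
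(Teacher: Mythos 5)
Your proof is correct and follows essentially the same route as the paper's: identify a minimum accurate dominating set as a $\gamma$-set meeting every $\gamma$-set for the forward direction, and observe that any $|D|$-element dominating subset of $V_G \setminus D$ would itself be a $\gamma$-set disjoint from $D$ for the converse. The only difference is that you spell out the backward direction a bit more explicitly than the paper does, which is fine.
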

\begin{proof}
First assume that $\adom(G)=\gamma(G)$, and let $D$ be a minimum accurate dominating set of $G$. Since $D$ is a dominating set of $G$ and $|D|=\adom(G)=\gamma(G)$, we note that $D \in \cAg(G)$. Now let $D'$ be an arbitrary minimum dominating set of $G$. If $D \cap D' = \emptyset$, then $D' \subseteq V_G \setminus D$, implying that $D'$ would be a $|D|$-element dominating set of $G$, contradicting the fact that $D$ is an accurate dominating set of~$G$. Hence, $D \cap D' \ne \emptyset$.

Now assume that there exists a set $D \in \cAg(G)$ such that $D \cap D' \ne \emptyset$ for every set $D' \in \cAg(G)$. Then, $D$ is an accurate dominating set of $G$, implying that $\adom(G) \le |D| = \gamma(G) \le \adom(G)$. Consequently, we must have equality throughout this inequality chain, and so $\adom(G)=\gamma(G)$. \end{proof}

\medskip
It follows from Lemma \ref{twierdzenie1} that if $G$ is a disconnected graph, then
$\adom(G)=\gamma(G)$ if and only if $\adom(H)=\gamma(H)$ for at least one component $H$ of $G$. In particular, if $G$ has an isolated vertex, then
$\adom(G)=\gamma(G)$.  It also follows from Lemma \ref{twierdzenie1} that for a graph $G$, $\adom(G)=\gamma(G)$ if $G$ has one of the following properties: (1) $G$ has a unique minimum dominating set (see, for example, \cite{Fischermann} or \cite{GuntherHartnellMarkusRall} for some characterizations of such graphs); (2) $G$ has a vertex which belongs to every minimum dominating set of $G$ (see \cite{Mynhardt}); (3) $G$ has a vertex adjacent to at least two leaves. Consequently, there is no forbidden subgraph characterization for the class of graphs $G$ for which $\adom(G)=\gamma(G)$, as for any graph $H$, we can add an isolated vertex (or two leaves to one vertex of $H$), and in this way form a~graph $H'$ for which $\adom(H')= \gamma(H')$.

The \emph{corona} $F \circ K_1$ of a graph $F$ is the graph formed from $F$ by adding a new vertex $v'$ and edge $vv'$ for each vertex $v \in V(F)$. A graph $G$ is said to be a \emph{corona graph} if $G = F \circ K_1$ for some connected graph $F$. We note that each vertex of a corona graph $G$  is a leaf or it is adjacent to exactly one leaf of $G$. Recall that we denote the set of all leaves in a graph $G$ by $L_G$, and set of support vertices in $G$ by $S_G$.

\begin{lem} \label{twierdzenie-corona}
If $G$ is a corona graph, then
$\adom(G) > \gamma(G)$.
\end{lem}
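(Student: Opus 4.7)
The plan is to verify, in the contrapositive form of Lemma~\ref{twierdzenie1}, that for every $D \in \cAg(G)$ there exists $D' \in \cAg(G)$ with $D \cap D' = \emptyset$. A natural candidate to try is simply $D' = V_G \setminus D$, and the heart of the argument is showing that this complement is itself a minimum dominating set of $G$.

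First, I would pin down $\gamma(G)$ together with the ``pair structure'' of its minimum dominating sets. Write $G = F \circ K_1$, so $V_G$ partitions as $V_F \cup L_G$ with each $v \in V_F$ paired with its unique leaf $v'$. Since $v'$ has only $v$ as a neighbor in $G$, every dominating set of $G$ must contain at least one vertex from each pair $\{v,v'\}$. The support set $V_F$ is a dominating set, so $\gamma(G) = n(F)$. Consequently, any $D \in \cAg(G)$ satisfies $|D| = n(F)$ and therefore $|D \cap \{v,v'\}| = 1$ for every $v \in V_F$.

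Next, given an arbitrary $D \in \cAg(G)$, let $D' = V_G \setminus D$. From the pair structure, $D'$ also contains exactly one vertex of $\{v,v'\}$ for each $v \in V_F$, so $|D'| = n(F) = \gamma(G)$. To verify $D'$ dominates $G$, I would observe that for each $v \in V_F$ the unique element of $D' \cap \{v,v'\}$ is adjacent (or equal) to both $v$ and $v'$, so every vertex of $G$ is dominated by $D'$. Hence $D' \in \cAg(G)$, and by construction $D \cap D' = \emptyset$.

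Since $D \in \cAg(G)$ was arbitrary, no $D \in \cAg(G)$ can meet every member of $\cAg(G)$, and Lemma~\ref{twierdzenie1} yields $\adom(G) > \gamma(G)$. There is no real obstacle here; the proof reduces to the observation that in a corona graph the complement of any minimum dominating set is again a minimum dominating set, and I would make the pair identity $|D \cap \{v,v'\}| = 1$ explicit since it simultaneously governs the cardinality of $D'$ and the domination check.
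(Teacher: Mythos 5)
Your proof is correct and follows essentially the same route as the paper's: both arguments rest on the pair identity $|D \cap \{v,v'\}| = 1$ for every minimum dominating set $D$, from which the complement $V_G \setminus D$ is again a minimum dominating set, so no minimum dominating set is accurate. (The paper treats $G = K_2$ as a separate trivial case, but your pairing argument covers it as well; only your claim that $V_G$ partitions as $V_F \cup L_G$ is slightly off for $n(F)=1$, where both vertices of $K_2$ are leaves.)
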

\begin{proof} Assume that $G$ is a corona graph. If $G = K_1 \circ K_1$, then $G = K_2$ and $\adom(G) = 2$ and $\gamma(G) = 1$. Hence, we may assume that $G = F \circ K_1$ for some connected graph $F$ of order $n(F) \ge 2$. If $v \in V_G \setminus L_G$, then let $\barv$ denote the unique leaf-neighbor of $v$ in $G$. Now let $D$ be an arbitrary minimum dominating set of $G$, and so $D \in \cAg(G)$. Then, $|D \cap \{v, \barv \}| =1$ for every $v \in V_G \setminus L_G$. Consequently, $D$ and its complement $V_G \setminus D$ are minimum dominating sets of $G$. Thus, $D$ is not an accurate dominating set of $G$. This is true for every minimum dominating set of $G$, implying that $\adom(G) > \gamma(G)$.
\end{proof}

\begin{lem}
\label{l:support}
%\label{twierdzenie-zbiory-dominujace-dla-drzew}
If $T$ is a tree of order at least three, then there exists a set $D \in \cAg(T)$ such that the following hold. \\[-27pt]
\begin{enumerate}
\item $S_T\subseteq D$.
\item $N_T(v)\subseteq V_T \setminus D$ or $|\pn_T(v,D)|\ge 2$ for every $v \in D \setminus S_T$.
    %  in $D_0$ that is not a support vertex.%
\end{enumerate}
\end{lem}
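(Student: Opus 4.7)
The plan is to build $D$ in two stages: first secure property (a), then upgrade the set to also satisfy (b) by minimizing a suitable potential.

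Stage one uses the standard support-vertex argument. Starting from any $D_0 \in \cAg(T)$, for each support vertex $s \in S_T$ with $s \notin D_0$, every leaf-neighbor of $s$ must belong to $D_0$, since its only neighbor is $s$. If $s$ has two or more leaves, replacing all of them in $D_0$ by the single vertex $s$ strictly decreases the size of a dominating set, contradicting the minimality of $|D_0|$; otherwise $s$ has a unique leaf $\ell \in D_0$ and the swap $(D_0 \setminus \{\ell\}) \cup \{s\}$ preserves both cardinality and domination. Iterating yields some $D \in \cAg(T)$ with $S_T \subseteq D$.

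For stage two, among all $D \in \cAg(T)$ satisfying (a), I would pick one that minimizes $\Phi(D) := |E_{T[D]}|$, the number of edges of $T$ both of whose endpoints lie in $D$, and argue by contradiction that it satisfies (b). Suppose some $v \in D \setminus S_T$ violates (b): then there is $u \in N_T(v) \cap D$ and $|\pn_T(v,D)| \le 1$. Since $D$ is a minimum dominating set, $\pn_T(v, D)$ is nonempty (otherwise $D \setminus \{v\}$ would still dominate $T$, as $v$ itself is dominated by $u$ and no vertex is uniquely $D$-dominated by $v$). Hence $\pn_T(v, D) = \{w\}$ for a unique $w$. Because $u \in N_T(v) \cap D$ precludes $v \in \pn_T(v,D)$, we conclude $w \in N_T(v) \setminus D$ and $N_T(w) \cap D = \{v\}$.

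The decisive move is the swap $D' := (D \setminus \{v\}) \cup \{w\}$. Clearly $|D'| = |D|$ and $S_T \subseteq D \setminus \{v\} \subseteq D'$; moreover $D'$ dominates $T$, since $w$, the unique vertex whose sole $D$-dominator was $v$, now dominates itself, while $v$ is dominated by $u \in D'$. The key observation is that $N_T(w) \cap D = \{v\}$ forces $w$ to have no neighbor in $D' = (D \setminus \{v\}) \cup \{w\}$, so $T[D']$ coincides with $T[D \setminus \{v\}]$ and in particular misses the edge $uv$ present in $T[D]$. Thus $\Phi(D') < \Phi(D)$, contradicting the choice of $D$.

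I expect the main obstacle to be identifying the correct potential function in stage two; once $\Phi(D) = |E_{T[D]}|$ is chosen, everything reduces to the single observation that the private neighbor $w$ of a violating $v$ is isolated in $T[D]$, which makes the swap strictly reduce $\Phi$ while preserving (a). The tree structure plays no essential role beyond $T$ being a simple graph.
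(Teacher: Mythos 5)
Your proof is correct and takes essentially the same route as the paper: the paper chooses $D \in \cAg(T)$ maximizing first the number of support vertices it contains and then $\kappa(T[D])$, and performs exactly your swap $(D\setminus\{v\})\cup\{w\}$ with the unique private neighbor. Since $T[D]$ induces a forest on a fixed number $|D|=\gamma(T)$ of vertices, minimizing $|E_{T[D]}|$ is equivalent to maximizing $\kappa(T[D])$, so your potential $\Phi$ is the paper's in disguise.
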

\begin{proof} Let $T$ be a tree of order $n(T) \ge 3$. Among all minimum dominating sets of $T$, let  $D \in \cAg(T)$ be chosen that \2 \\
\indent (1) $D$ contains as many support vertices as possible. \\
\indent (2) Subject to (1), the number of components $\kappa(T[D])$ is as large as possible.

If the set $D$ contains a leaf~$v$ of $T$, then we can simply replace $v$ in $D$ with the support vertex adjacent to $v$ to produce a new minimum dominating set with more support vertices than $D$, a contradiction. Hence, the set $D$ contains no leaves, implying that $S_T \subseteq D$. Suppose, next, that there exists a vertex $v$ in $D$ that is not a support vertex of $T$ and such that $N_T(v) \not\subseteq V_T \setminus D$. Thus, $v$ has at least one neighbor in $D$; that is, $N_T(v)\cap D \ne \emptyset$. By the minimality of the set $D$, we therefore note that $\pn_T(v,D) \ne  \emptyset$. If $|\pn_T(v,D)| = 1$, say $\pn_T(v,D)=\{u\}$, then letting $D' = (D \setminus \{v\}) \cup \{u\}$, the set $D' \in \cAg(T)$ and satisfies $S_T \subseteq D \setminus \{v\} \subset D'$ and $\kappa(T[D']) > \kappa(T[D])$, which contradicts the choice of~$D$. Hence, if $v \in D$ is not a support vertex of $T$ and $N_T(v) \not\subseteq V_T \setminus D$, then $|\pn_T(v,D)|\ge 2$.
\end{proof}

We are now in a position to present the following equivalent characterizations of trees for which the accurate domination number is equal to the domination number.

\begin{thm}
\label{t:trees}
If $T$ is a tree of order at least two,
then the following statements are equivalent:
\\[-27pt]
\begin{enumerate}
\item[{\rm (1)}] $T$ is not a corona graph.
\item[{\rm (2)}] There exists a set $D \in \cAg(T)$ such that $\kappa(T-D) > |D|$.
\item[{\rm (3)}] $\adom(T)= \gamma(T)$.
\item[{\rm (4)}] There exists a set $D \in \cAg(T)$ such that $D \cap D' \ne \emptyset$ for every $D' \in \cAg(T)$.
\end{enumerate}
\end{thm}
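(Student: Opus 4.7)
The plan is to establish the cycle $(4) \Leftrightarrow (3) \Rightarrow (1) \Rightarrow (2) \Rightarrow (3)$. Two of these links are already in hand: $(3) \Leftrightarrow (4)$ is precisely Lemma~\ref{twierdzenie1}, and $(3) \Rightarrow (1)$ is the contrapositive of Lemma~\ref{twierdzenie-corona}. The only tree on two vertices, $K_2$, is a corona and falsifies each of (1)--(4), so below I tacitly work with $n(T) \ge 3$. My two remaining tasks are therefore $(2) \Rightarrow (3)$ and $(1) \Rightarrow (2)$.

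For $(2) \Rightarrow (3)$: given $D \in \cAg(T)$ with $\kappa(T-D) > |D|$, I will show $D$ is accurate. If some $|D|$-element set $D' \subseteq V_T \setminus D$ dominated $T$, then for each component $C$ of $T-D$ every vertex of $C$ would be dominated by $D'$, and since $D' \subseteq V_T \setminus D$, a vertex of $C$ not in $D'$ would need its dominating neighbor in $V_T \setminus D$, forcing that neighbor to lie inside $C$. Hence $D'$ meets every component of $T-D$, giving $|D'| \ge \kappa(T-D) > |D|$, a contradiction.

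The main task is $(1) \Rightarrow (2)$. I apply Lemma~\ref{l:support} to obtain $D \in \cAg(T)$ satisfying $S_T \subseteq D$, $D \cap L_T = \emptyset$ (established inside that lemma's proof), and the private-neighbor condition on $D \setminus S_T$. The calculation runs through the tree identity
\[
\kappa(T-D) = \sum_{v \in D}|N_T(v)\setminus D| - \kappa(T[D]) + 1,
\]
obtained from $|E(T[D])| + |E(T-D)| + \sum_{v \in D}|N_T(v)\setminus D| = n(T)-1$ together with the forest formulas $|E(T[D])| = |D|-\kappa(T[D])$ and $|E(T-D)| = n(T)-|D|-\kappa(T-D)$. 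Writing $e_v := |N_T(v) \setminus D|$, I decompose $D = A_1 \cup A_2 \cup A_3$ with $A_1 := S_T$, $A_2 := \{v \in D \setminus S_T : N_T(v) \subseteq V_T \setminus D\}$, and $A_3 := D \setminus (A_1 \cup A_2)$. For $v \in A_2$, clearly $e_v = d_T(v) \ge 2$; for $v \in A_3$ the two private neighbors promised by Lemma~\ref{l:support} are forced to be external (because $v$ has a $D$-neighbor, hence $v \notin \pn_T(v,D)$), so again $e_v \ge 2$. Splitting $A_1 = A_1^{(1)} \cup A_1^{(\ge 2)}$ by whether $e_v = 1$ or $e_v \ge 2$ yields $\sum_{v \in D} e_v \ge 2|D| - |A_1^{(1)}|$.

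The crux, and the place where hypothesis (1) finally enters, is the bound $\kappa(T[D]) \le |D| - |A_1^{(1)}|$; equivalently, that no component of $T[D]$ consists entirely of $A_1^{(1)}$-vertices. A vertex $v \in A_1^{(1)}$ is a support whose unique non-$D$ neighbor is its single leaf $l_v$, so the remaining $d_T(v)-1$ neighbors of $v$ sit in $D$ and therefore in the same $T[D]$-component as $v$. If some component $C$ of $T[D]$ were contained in $A_1^{(1)}$, then $C \cup \{l_v : v \in C\}$ would be closed under taking $T$-neighbors, and by connectedness of $T$ it would equal $V_T$, exhibiting $T = T[C] \circ K_1$ as a corona and contradicting (1). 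Combined with the degree count, this yields $\kappa(T-D) \ge (2|D|-|A_1^{(1)}|) - (|D|-|A_1^{(1)}|) + 1 = |D|+1$, finishing $(1) \Rightarrow (2)$. The main obstacle is precisely this closing move: the routine degree estimate slips by exactly $|A_1^{(1)}|$, and the non-corona hypothesis has to be converted into the structural statement that an all-$A_1^{(1)}$ component of $T[D]$ would rebuild $T$ as a corona --- identifying this as the unique obstruction, rather than attempting a purely local degree argument, is the heart of the proof.
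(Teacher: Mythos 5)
Your proof is correct, but your argument for the key implication $(1)\Rightarrow(2)$ is genuinely different from the paper's. The paper proceeds by induction on $n(T)$: it splits $T$ at a support vertex of degree at least $3$ (or, after attaching an auxiliary pendant vertex, at a vertex of $D\setminus S_T$) into two subtrees overlapping in an edge, applies the induction hypothesis to each piece, and adds the resulting component counts; a separate terminal case handles $D=S_T$ with all supports of degree $2$. You instead give a direct, non-inductive count: starting from the same set $D$ supplied by Lemma~\ref{l:support}, you use the edge-partition identity $\kappa(T-D)=\sum_{v\in D}|N_T(v)\setminus D|-\kappa(T[D])+1$ for forests, show every vertex of $D$ outside $A_1^{(1)}$ has at least two neighbors outside $D$, and convert the non-corona hypothesis into the single structural fact that no component of $T[D]$ can consist entirely of $A_1^{(1)}$-vertices (else $T=T[C]\circ K_1$). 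I checked the identity, the lower bound $\sum_v e_v\ge 2|D|-|A_1^{(1)}|$, the externality of the private neighbors for $A_3$, and the closure argument for $W=C\cup\{l_v:v\in C\}$; all are sound, and your handling of $K_2$ and of $(2)\Rightarrow(3)$ matches the paper. Your route buys a shorter, case-free argument that isolates exactly where hypothesis (1) is used, at the cost of relying more heavily on Lemma~\ref{l:support}(b); the paper's induction is longer but more elementary in its local steps. One cosmetic point: $\kappa(T[D])\le|D|-|A_1^{(1)}|$ is \emph{implied by}, not equivalent to, the absence of an all-$A_1^{(1)}$ component of $T[D]$ --- but the implication runs in the direction you need, so nothing is lost.
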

\begin{proof} The statements (3) and (4) are equivalent by Lemma \ref{twierdzenie1}. The implication $(3)\Rightarrow (1)$ follows from Lemma \ref{twierdzenie-corona}. To prove the implication $(2)\Rightarrow (3)$, let us assume that $D \in \cAg(T)$ and $\kappa(T-D) > |D|$. Thus, $\gamma(T-D) \ge \kappa(T-D) > |D|= \gamma(T)$. This proves that $D$ is an accurate dominating set of $T$, and therefore $\adom(T)= \gamma(T)$.

Thus it suffices to prove that (1) implies (2). The proof is by induction on the order of a tree. The implication $(1) \Rightarrow (2)$ is obvious for trees of order two, three, and four. Thus assume that $T$ is a tree of order at least five and $T$ is not a corona graph. Let $D \in \cAg(T)$ and assume that $S_T \subseteq D$. Since $T$ is not a corona graph, the tree $T$ has a vertex which is neither a leaf nor adjacent to exactly one leaf. We consider two cases, depending on whether $d_T(v) \ge 3$ for some vertex $v \in S_T$ or $d_T(v)=2$ for every vertex $v \in S_T$.

\medskip
\emph{Case~1. $d_T(v) \ge 3$ for some $v \in S_T$.} Let $v'$ be a~leaf of $T$ adjacent to $v$. Let $T'$ be a~component of $T-\{v,v'\}$. Now let $T_1$ and $T_2$ be the subtrees of $T$ induced on the vertex sets $V_{T'} \cup \{v,v'\}$ and $V_T \setminus V_{T'}$, respectively. We note that both trees $T_1$ and $T_2$ have order strictly less than $n(T)$. Further, $V(T_1) \cap V(T_2) = \{v,v'\}$, $E(T_1) \cap E(T_2) = \{vv'\}$, and at least one of $T_1$ and $T_2$, say $T_1$, is not a corona graph. Applying the induction hypothesis to $T_1$, there exists a set $D_1 \in \cAg(T_1)$ such that $\kappa(T_1 - D_1) > |D_1|$. If $T_2$ is a corona graph, then choosing $D_2$ to be the set of support vertices in $T_2$ we note that $D_2 \in \cAg(T_2)$ and $\kappa(T_2 - D_2) = |D_2|$. If $T_2$ is not a corona graph, then applying the induction hypothesis to $T_2$, there exists a set $D_2 \in \cAg(T_2)$ such that $\kappa(T_2 - D_2) > |D_2|$. In both cases, there exists a set $D_2 \in \cAg(T_2)$ such that $\kappa(T_2 - D_2) \ge |D_2|$. We may assume that all support vertices of $T_1$ and $T_2$ are in $D_1$ and $D_2$, respectively. Thus, $v \in D_1 \cap D_2$, the union $D_1 \cup D_2$ is a $\gamma$-set of $T$, and $\kappa(T-(D_1\cup D_2))= \kappa(T_1-D_1) + \kappa(T_2-D_2) -1> |D_1|+|D_2|-1= |D_1\cup D_2|$.

\medskip
\emph{Case~2. $d_T(v)=2$ for every  $v\in S_T$.} We distinguish two subcases, depending on whether $D \setminus S_T \ne \emptyset$ or $D \setminus S_T = \emptyset$.

\medskip
\emph{Case~2.1. $D \setminus S_T \ne \emptyset$.} Let $v$ be an arbitrary vertex belonging to $D \setminus S_T$. It follows from the second part of Lemma~\ref{l:support} that there are two vertices $v_1$ and $v_2$ belonging to $N_T(v) \setminus D$. Let $R$ be the tree obtained from $T$ by adding a new vertex $v'$ and the edge $vv'$. We note that $D$ is a minimum dominating set of $R$ and $S_R \subseteq D$. Let $R'$ be the component of $R-\{v,v'\}$ containing $v_1$. Now let $R_1$ and $R_2$ be the subtrees of $R$ induced by the vertex sets $V_{R'} \cup \{v,v'\}$ and $V_R \setminus V_{R'}$, respectively. We note that both trees $R_1$ and $R_2$ have order strictly less than $n(T)$. Further, $V(R_1) \cap V(R_2) = \{v,v'\}$, $E(R_1) \cap E(R_2) = \{vv'\}$, and neither $R_1$ nor $R_2$ is a corona graph. By the induction hypothesis, there exists a set $D_1 \in \cAg(R_1)$ and a set $D_2 \in \cAg(R_2)$ such that $\kappa(R_1 - D_1) > |D_1|$ and $\kappa(R_2 - D_2) > |D_2|$. We may assume that all support vertices of $R_1$ and $R_2$ are in $D_1$ and $D_2$, respectively. Thus, $v \in D_1 \cap D_2$, the union $D_1 \cup D_2$ is a $\gamma$-set of $R$, and \[
\begin{array}{lcl}
\kappa(T-(D_1\cup D_2)) & = & \kappa(R-(D_1\cup D_2)) - 1 \\
& = & (\kappa(R_1-D_1) + \kappa(R_2-D_2) - 1) - 1 \\
& = & (\kappa(R_1-D_1) - |D_1| + \kappa(R_2-D_2) - |D_2| ) - 2 + |D_1| + |D_2| \\
& \ge & |D_1| + |D_2| \\
& = & |D_1 \cup D_2| + 1 \\
& > & |D_1\cup D_2|.
\end{array}
\]

\medskip
\emph{Case~2.2. $D \setminus S_T = \emptyset$.} In this case, we note that $D = S_T$. Let $v$ be an arbitrary vertex belonging to $D$ and assume that $N_T(v) = \{u, w\}$, where $u \in L_T$. If $w \in L_T$, then $T = K_{1,2}$, contradicting the assumption that $n(T) \ge 5$. If $w \in S_T$, then $T = P_4 = K_2 \circ K_1$, contradicting the assumption that $T$ is not a corona graph (and the assumption that $n(T) \ge 5$). Therefore, $w \in V_T \setminus (L_T \cup S_T)$. Thus,  $V_T \setminus (L_T\cup S_T)$ is nonempty and $T - D$ has $|D|$ one-element components induced by leaves of $T$ and at least one component induced by $V_T \setminus (L_T \cup S_T)$. Consequently, $\kappa(T-D)\ge |D|+1>|D|$. This completes the proof of Theorem~\ref{t:trees}.
\end{proof}

The equivalence of the statements (1) and (3) of Theorem~\ref{t:trees} shows that the trees $T$ for which $\adom(T)= \gamma(T)$ are easy to recognize. From Theorem~\ref{t:trees} and from the well-known fact that $\gamma(P_n) = \lceil n/3\rceil$ for every positive integer $n$, we also immediately have the following corollary which provides a slight improvement on Proposition~3 in~\cite{KulliKattimani}.

\begin{cor} \label{wniosek-sciezki}
For $n \ge 1$, $\adom(P_n)= \gamma(P_n)= \lceil n/3\rceil$ if and only if $n\in \mathbb{N} \setminus \{2,4\}$.
\end{cor}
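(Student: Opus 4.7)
The plan is to reduce the corollary to the equivalence $(1) \Leftrightarrow (3)$ of Theorem~\ref{t:trees}, together with the classical identity $\gamma(P_n)=\lceil n/3\rceil$. Since the latter is recalled in the paragraph preceding the corollary, the entire content of the proof is to identify for which $n$ the path $P_n$ is a corona graph.

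First I would dispose of the case $n=1$ separately. For $P_1$ the only dominating set is $D=V_{P_1}$, and $V_{P_1}\setminus D=\emptyset$ contains no $1$-element subset, so $D$ is vacuously accurate; hence $\adom(P_1)=\gamma(P_1)=1=\lceil 1/3\rceil$, and indeed $1\in\mathbb{N}\setminus\{2,4\}$. For $n\ge 2$, Theorem~\ref{t:trees} applies and tells us that $\adom(P_n)=\gamma(P_n)$ if and only if $P_n$ is not a corona graph, so it remains to show that $P_n$ is a corona graph precisely when $n\in\{2,4\}$.

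For the ``easy'' direction I would simply observe that $K_1\circ K_1=K_2=P_2$ and $K_2\circ K_1=P_4$, so $P_2$ and $P_4$ are corona graphs. For the converse, I would use the characterization recorded just before Lemma~\ref{twierdzenie-corona}: every vertex of a corona graph is a leaf or is adjacent to exactly one leaf. In $P_3=v_1v_2v_3$ the vertex $v_2$ is adjacent to two leaves, so $P_3$ is not a corona. In $P_n$ with $n\ge 5$, any vertex at distance $2$ from an endpoint is not a leaf and has both of its neighbors of degree $2$; in particular it is not adjacent to any leaf, so $P_n$ is not a corona graph.

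Combining the two directions, $P_n$ is a corona graph if and only if $n\in\{2,4\}$, and therefore by Theorem~\ref{t:trees} we have $\adom(P_n)=\gamma(P_n)$ if and only if $n\in\mathbb{N}\setminus\{2,4\}$. Finally, $\gamma(P_n)=\lceil n/3\rceil$ for every $n\ge 1$, which upgrades the equality $\adom(P_n)=\gamma(P_n)$ to $\adom(P_n)=\gamma(P_n)=\lceil n/3\rceil$ and finishes the proof. There is no real obstacle: the whole argument is a short structural check that $P_n$ is a corona graph iff $n\in\{2,4\}$, feeding into Theorem~\ref{t:trees}.
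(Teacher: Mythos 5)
Your proof is correct and follows exactly the route the paper intends: the corollary is derived from the equivalence $(1)\Leftrightarrow(3)$ of Theorem~\ref{t:trees} together with $\gamma(P_n)=\lceil n/3\rceil$, the only content being that $P_n$ is a corona graph precisely for $n\in\{2,4\}$. Your separate treatment of $n=1$ (where Theorem~\ref{t:trees} does not apply) and your verification that $P_3$ and $P_n$ for $n\ge 5$ are not coronas are exactly the details the paper leaves implicit.
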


\section{Domination of general coronas of a graph}

Let $G$ be a graph, and let $\cF =\{F_v\colon v\in V_G\}$ be a family of nonempty graphs indexed by the vertices of $G$. By $G \circ \cF $ we denote the graph with vertex set
\[
V_{G\circ \cF }= (V_G\times \{0\})\cup \bigcup_{v\in V_G}(\{v\}\times V_{F_v})
\]
and edge set determined by open neighborhoods defined in such a way that
\[
N_{G\circ \cF }((v,0)) = (N_G(v)\times \{0\})\cup (\{v\}\times V_{F_v})
\]
for every $v\in V_G$, and
\[
N_{G\circ \cF }((v,x)) = \{(v,0)\}\cup (\{v\}\times N_{F_v}(x))
\]
if $v\in V_G$ and $x\in V_{F_v}$. The graph $G\circ \cF $ is said to be the $\cF$-\emph{corona} of $G$. Informally,  $G \circ \cF$ is the graph obtained by taking a disjoint copy of $G$ and all the graphs of $\cF $  with additional edges joining each vertex $v$ of $G$ to every vertex in the copy of $F_v$. If all the graphs of the family $\cF $ are isomorphic to one and the same graph $F$ (as it was defined by Frucht and Harary \cite{FruchtHarary}), then we simply write $G\circ F$ instead of $G \circ \cF $. Recall that a graph $G$ is said to be a \emph{corona graph} if $G = F \circ K_1$ for some connected graph $F$.

The $2$-\emph{subdivided graph} $S_2(G)$ of a graph $G$ is the graph with vertex set
\[
V_{S_2(G)}= V_G \cup \bigcup_{vu \in E_G}\{(v,vu), (u,vu)\}
\]
and the adjacency is defined in such a way that
\[
N_{S_2(G)}(x)= \{(x,xy)\colon y\in N_G(x)\}
\]
if $x\in V_G\subseteq V_{S_2(G)}$, while
\[
N_{S_2(G)}((x,xy))= \{x\}\cup \{(y,xy)\}
\]
if $(x,xy)\in \bigcup_{vu\in E_G}\{(v,vu), (u,vu)\} \subseteq V_{S_2(G)}$. Less formally, $S_2(G)$ is the graph obtained from $G$ by subdividing every edge with two new vertices; that is, by replacing edges $vu$ of $G$ with disjoint paths $(v,(v,vu),(u,vu),u)$.

For a graph $G$ and a family $\cP = \{ \mathcal{P}(v) \colon v \in V_G\}$, where $\mathcal{P}(v)$  is a partition of the neighborhood $N_G(v)$ of the vertex $v$, by $G \circ \mathcal{P}$ we denote the graph with vertex set
\[
V_{G\circ \mathcal{P}} =(V_G\times \{1\}) \cup \bigcup_{v\in V_G} (\{v\} \times \mathcal{P}(v))
\]
and edge set
\[
E_{G\circ \mathcal{P}}= \bigcup_{v\in V_G} \{(v,1)(v,A)\colon A \in \mathcal{P}(v)\} \cup \bigcup_{uv\in E_G}\{(v,A)(u,B)\colon  (u\in A)  \wedge  (v\in B) \}.
\]

The graph $G\circ \mathcal{P}$ is called the $\mathcal{P}$-\emph{corona} of $G$ and was defined by Dettlaff et al. in \cite{Dettlaff...Zylinski}. It follows from this definition that if $\cP (v)= \{N_G(v)\}$ for every $v\in V_G$, then $G\circ \mathcal{P}$ is isomorphic to the corona $G\circ K_1$. On the other hand, if $\cP (v)= \{\{u\}\colon u\in N_G(v)\}$ for every $v\in V_G$, then $G\circ \mathcal{P}$ is isomorphic to the 2-subdivided graph $S_2(G)$ of~$G$. Examples of  $G\circ K_1$, $G\circ \mathcal{F}$, $G\circ \mathcal{P}$, and $S_2(G)$ are shown in Fig. \ref{rys1}.
In this case $G$ is the graph $(K_2\cup K_1)+K_1$ with vertex set $V_G=\{v, u, w, z\}$ and edge set $E_G= \{vu, vw, uw, wz\}$, where the family $\cF $ consists of the graphs $F_v=F_w=K_1$, $F_z=K_2$, and $F_u= K_2\cup K_1$, while $\cP = \{\cP (x)\colon x\in V_G\}$ is the family in which $\cP (v) =\{\{u, w\}\}$, $\cP (u) =\{\{v\}, \{w\}\}$, $\cP (w) =\{\{u, v\}, \{z\}\}$, and $\cP (z) =\{\{w\}\}$.

\begin{figure}[ht]
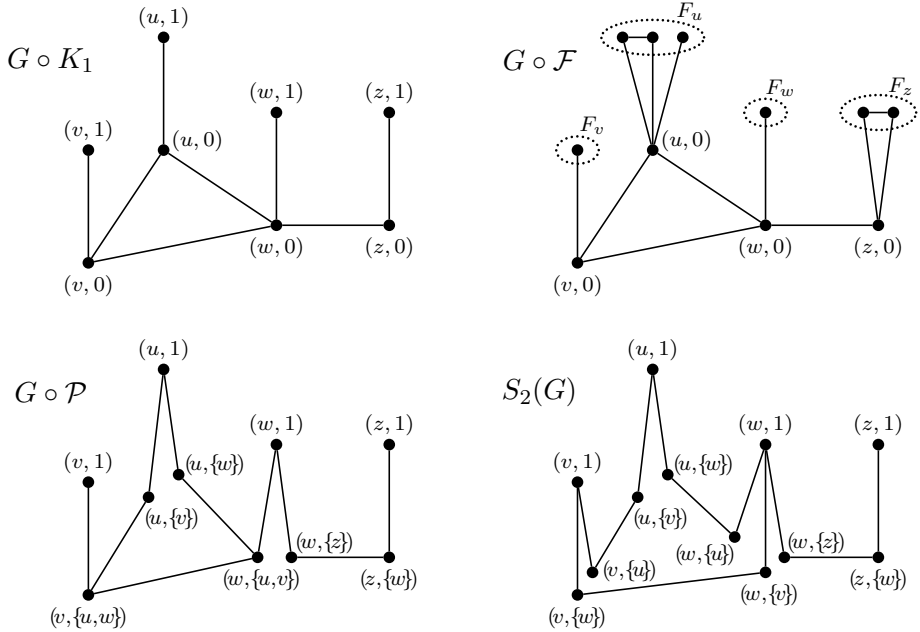

\vspace{6ex}

\rput(9.5,-1.7){\pspicture(6.5,5)
\cnode*[linewidth=0.5pt,fillstyle=solid,fillcolor=lightgray,linecolor=black](0,1){2.2pt}{s1}
\cnode*[linewidth=0.5pt,fillstyle=solid,fillcolor=lightgray,linecolor=black](2.5,1.5){2.2pt}{s2}
\cnode*[linewidth=0.5pt,fillstyle=solid,fillcolor=lightgray,linecolor=black](4,1.5){2.2pt}{s3}
\cnode*[linewidth=0.5pt,fillstyle=solid,fillcolor=lightgray,linecolor=black](1,2.5){2.2pt}{s4}
\cnode*[linewidth=0.5pt,fillstyle=solid,fillcolor=lightgray,linecolor=black](0,2.5){2.2pt}{k1}
\cnode*[linewidth=0.5pt,fillstyle=solid,fillcolor=lightgray,linecolor=black](2.5,3){2.2pt}{k2}
\cnode*[linewidth=0.5pt,fillstyle=solid,fillcolor=lightgray,linecolor=black](4,3){2.2pt}{k3}
\cnode*[linewidth=0.5pt,fillstyle=solid,fillcolor=lightgray,linecolor=black](1,4){2.2pt}{k4}

\ncline[linewidth=0.6pt, arrowsize=4pt 2]{-}{s1}{s2}
\ncline[linewidth=0.6pt, arrowsize=4pt 2]{-}{s2}{s3}
\ncline[linewidth=0.6pt, arrowsize=4pt 2]{-}{s1}{s4}
\ncline[linewidth=0.6pt, arrowsize=4pt 2]{-}{s4}{s2}
\ncline[linewidth=0.6pt, arrowsize=4pt 2]{-}{s1}{k1}
\ncline[linewidth=0.6pt, arrowsize=4pt 2]{-}{s2}{k2}
\ncline[linewidth=0.6pt, arrowsize=4pt 2]{-}{s3}{k3}
\ncline[linewidth=0.6pt, arrowsize=4pt 2]{-}{s4}{k4}

\rput(0,0.7){\scriptsize $(v,0)$}
\rput(2.5,1.2){\scriptsize $(w,0)$}
\rput(4,1.2){\scriptsize $(z,0)$}
\rput(1.45,2.63){\scriptsize $(u,0)$}
\rput(0,2.76){\scriptsize $(v,1)$}
\rput(2.5,3.26){\scriptsize $(w,1)$}
\rput(4,3.26){\scriptsize $(z,1)$}
\rput(1,4.26){\scriptsize $(u,1)$}
\rput(-0.5,3.7){$G\circ K_1$}

\endpspicture
\pspicture(8.5,5)
\cnode*[linewidth=0.5pt,fillstyle=solid,fillcolor=lightgray,linecolor=black](0,1){2.2pt}{s1}
\cnode*[linewidth=0.5pt,fillstyle=solid,fillcolor=lightgray,linecolor=black](2.5,1.5){2.2pt}{s2}
\cnode*[linewidth=0.5pt,fillstyle=solid,fillcolor=lightgray,linecolor=black](4,1.5){2.2pt}{s3}
\cnode*[linewidth=0.5pt,fillstyle=solid,fillcolor=lightgray,linecolor=black](1,2.5){2.2pt}{s4}
\cnode*[linewidth=0.5pt,fillstyle=solid,fillcolor=lightgray,linecolor=black](0,2.5){2.2pt}{k1}
\cnode*[linewidth=0.5pt,fillstyle=solid,fillcolor=lightgray,linecolor=black](2.5,3){2.2pt}{k2}
\cnode*[linewidth=0.5pt,fillstyle=solid,fillcolor=lightgray,linecolor=black](3.8,3){2.2pt}{k31}
\cnode*[linewidth=0.5pt,fillstyle=solid,fillcolor=lightgray,linecolor=black](4.2,3){2.2pt}{k32}
\cnode*[linewidth=0.5pt,fillstyle=solid,fillcolor=lightgray,linecolor=black](0.6,4){2.2pt}{k41}
\cnode*[linewidth=0.5pt,fillstyle=solid,fillcolor=lightgray,linecolor=black](1,4){2.2pt}{k42}
\cnode*[linewidth=0.5pt,fillstyle=solid,fillcolor=lightgray,linecolor=black](1.4,4){2.2pt}{k43}

\ncline[linewidth=0.6pt, arrowsize=4pt 2]{-}{s1}{s2}
\ncline[linewidth=0.6pt, arrowsize=4pt 2]{-}{s2}{s3}
\ncline[linewidth=0.6pt, arrowsize=4pt 2]{-}{s1}{s4}
\ncline[linewidth=0.6pt, arrowsize=4pt 2]{-}{s4}{s2}
\ncline[linewidth=0.6pt, arrowsize=4pt 2]{-}{s1}{k1}
\ncline[linewidth=0.6pt, arrowsize=4pt 2]{-}{s2}{k2}
\ncline[linewidth=0.6pt, arrowsize=4pt 2]{-}{s3}{k31}
\ncline[linewidth=0.6pt, arrowsize=4pt 2]{-}{k31}{k32}
\ncline[linewidth=0.6pt, arrowsize=4pt 2]{-}{s3}{k32}
\ncline[linewidth=0.6pt, arrowsize=4pt 2]{-}{s4}{k41}
\ncline[linewidth=0.6pt, arrowsize=4pt 2]{-}{s4}{k42}
\ncline[linewidth=0.6pt, arrowsize=4pt 2]{-}{s4}{k43}
\ncline[linewidth=0.6pt, arrowsize=4pt 2]{-}{k41}{k42}

\rput(0,0.7){\scriptsize $(v,0)$}
\rput(2.5,1.2){\scriptsize $(w,0)$}
\rput(4,1.2){\scriptsize $(z,0)$}
\rput(1.45,2.63){\scriptsize $(u,0)$}
\rput(0.2,2.83){\scriptsize $F_v$}
\rput(2.7,3.33){\scriptsize $F_w$}
\rput(1.5,4.35){\scriptsize $F_u$}
\rput(4.3,3.35){\scriptsize $F_z$}
\psellipse[linewidth=1pt,linecolor=black,linestyle=dotted,dotsep=1pt](1,4)(0.7,0.25)
\psellipse[linewidth=1pt,linecolor=black,linestyle=dotted,dotsep=1pt](4,3)(0.52,0.25)
\psellipse[linewidth=1pt,linecolor=black,linestyle=dotted,dotsep=1pt](0,2.5)(0.3,0.2)
\psellipse[linewidth=1pt,linecolor=black,linestyle=dotted,dotsep=1pt](2.5,3)(0.3,0.2)
\rput(-0.5,3.7){$G\circ \mathcal{F}$}
\endpspicture}
\vspace{28ex}

\rput(9.5,-1){\pspicture(6.5,5)
\cnode*[linewidth=0.5pt,fillstyle=solid,fillcolor=lightgray,linecolor=black](0,1){2.2pt}{s1}
\cnode*[linewidth=0.5pt,fillstyle=solid,fillcolor=lightgray,linecolor=black](2.25,1.5){2.2pt}{s21}
\cnode*[linewidth=0.5pt,fillstyle=solid,fillcolor=lightgray,linecolor=black](2.7,1.5){2.2pt}{s22}
\cnode*[linewidth=0.5pt,fillstyle=solid,fillcolor=lightgray,linecolor=black](4,1.5){2.2pt}{s3}
\cnode*[linewidth=0.5pt,fillstyle=solid,fillcolor=lightgray,linecolor=black](0.8,2.3){2.2pt}{s41}
\cnode*[linewidth=0.5pt,fillstyle=solid,fillcolor=lightgray,linecolor=black](1.2,2.6){2.2pt}{s42}
\cnode*[linewidth=0.5pt,fillstyle=solid,fillcolor=lightgray,linecolor=black](0,2.5){2.2pt}{k1}
\cnode*[linewidth=0.5pt,fillstyle=solid,fillcolor=lightgray,linecolor=black](2.5,3){2.2pt}{k2}
\cnode*[linewidth=0.5pt,fillstyle=solid,fillcolor=lightgray,linecolor=black](4,3){2.2pt}{k3}
\cnode*[linewidth=0.5pt,fillstyle=solid,fillcolor=lightgray,linecolor=black](1,4){2.2pt}{k4}

\ncline[linewidth=0.6pt, arrowsize=4pt 2]{-}{s1}{s21}
\ncline[linewidth=0.6pt, arrowsize=4pt 2]{-}{s22}{s3}
\ncline[linewidth=0.6pt, arrowsize=4pt 2]{-}{s1}{s41}
\ncline[linewidth=0.6pt, arrowsize=4pt 2]{-}{s42}{s21}
\ncline[linewidth=0.6pt, arrowsize=4pt 2]{-}{s1}{k1}
\ncline[linewidth=0.6pt, arrowsize=4pt 2]{-}{s21}{k2}
\ncline[linewidth=0.6pt, arrowsize=4pt 2]{-}{s22}{k2}
\ncline[linewidth=0.6pt, arrowsize=4pt 2]{-}{s3}{k3}
\ncline[linewidth=0.6pt, arrowsize=4pt 2]{-}{s41}{k4}
\ncline[linewidth=0.6pt, arrowsize=4pt 2]{-}{s42}{k4}
\rput(0,0.7){\scriptsize $(\!v,\!\{\!u,\!w\!\}\!)$}
\rput(2.3,1.2){\scriptsize $(\!w,\!\{\!u,\!v\!\}\!)$}
\rput(3.144,1.7){\scriptsize $(\!w,\!\{\!z\!\}\!)$}
\rput(4,1.2){\scriptsize $(\!z,\!\{\!w\!\}\!)$}
\rput(1.109,2.012){\scriptsize $(\!u,\!\{\!v\!\}\!)$}
\rput(1.68,2.73){\scriptsize $(\!u,\!\{\!w\!\}\!)$}
\rput(0,2.76){\scriptsize $(v,1)$}
\rput(2.5,3.26){\scriptsize $(w,1)$}
\rput(4,3.26){\scriptsize $(z,1)$}
\rput(1,4.26){\scriptsize $(u,1)$}
\rput(-0.5,3.7){$G\circ \mathcal{P}$}

\endpspicture
\pspicture(8.5,5)
\cnode*[linewidth=0.5pt,fillstyle=solid,fillcolor=lightgray,linecolor=black](0,1){2.2pt}{s11}
\cnode*[linewidth=0.5pt,fillstyle=solid,fillcolor=lightgray,linecolor=black](0.2,1.3){2.2pt}{s12}
\cnode*[linewidth=0.5pt,fillstyle=solid,fillcolor=lightgray,linecolor=black](2.09,1.77){2.2pt}{s21}
\cnode*[linewidth=0.5pt,fillstyle=solid,fillcolor=lightgray,linecolor=black](2.5,1.3){2.2pt}{s22}
\cnode*[linewidth=0.5pt,fillstyle=solid,fillcolor=lightgray,linecolor=black](2.75,1.5){2.2pt}{s23}
\cnode*[linewidth=0.5pt,fillstyle=solid,fillcolor=lightgray,linecolor=black](4,1.5){2.2pt}{s3}
\cnode*[linewidth=0.5pt,fillstyle=solid,fillcolor=lightgray,linecolor=black](0.8,2.3){2.2pt}{s41}
\cnode*[linewidth=0.5pt,fillstyle=solid,fillcolor=lightgray,linecolor=black](1.2,2.6){2.2pt}{s42}
\cnode*[linewidth=0.5pt,fillstyle=solid,fillcolor=lightgray,linecolor=black](0,2.5){2.2pt}{k1}
\cnode*[linewidth=0.5pt,fillstyle=solid,fillcolor=lightgray,linecolor=black](2.5,3){2.2pt}{k2}
\cnode*[linewidth=0.5pt,fillstyle=solid,fillcolor=lightgray,linecolor=black](4,3){2.2pt}{k3}
\cnode*[linewidth=0.5pt,fillstyle=solid,fillcolor=lightgray,linecolor=black](1,4){2.2pt}{k4}

\ncline[linewidth=0.6pt, arrowsize=4pt 2]{-}{s11}{s22}
\ncline[linewidth=0.6pt, arrowsize=4pt 2]{-}{s23}{s3}
\ncline[linewidth=0.6pt, arrowsize=4pt 2]{-}{s12}{s41}
\ncline[linewidth=0.6pt, arrowsize=4pt 2]{-}{s42}{s21}
\ncline[linewidth=0.6pt, arrowsize=4pt 2]{-}{s11}{k1}
\ncline[linewidth=0.6pt, arrowsize=4pt 2]{-}{s12}{k1}
\ncline[linewidth=0.6pt, arrowsize=4pt 2]{-}{s21}{k2}
\ncline[linewidth=0.6pt, arrowsize=4pt 2]{-}{s22}{k2}
\ncline[linewidth=0.6pt, arrowsize=4pt 2]{-}{s23}{k2}
\ncline[linewidth=0.6pt, arrowsize=4pt 2]{-}{s3}{k3}
\ncline[linewidth=0.6pt, arrowsize=4pt 2]{-}{s41}{k4}
\ncline[linewidth=0.6pt, arrowsize=4pt 2]{-}{s42}{k4}
\rput(0,0.7){\scriptsize $(\!v,\!\{\!w\!\}\!)$}
\rput(0.7,1.33){\scriptsize $(\!v,\!\{\!u\!\}\!)$}
\rput(2.55,1){\scriptsize $(\!w,\!\{\!v\!\}\!)$}
\rput(3.177,1.7){\scriptsize $(\!w,\!\{\!z\!\}\!)$}
\rput(1.7,1.53){\scriptsize $(\!w,\!\{\!u\!\}\!)$}
\rput(4,1.2){\scriptsize $(\!z,\!\{\!w\!\}\!)$}
\rput(1.109,2.012){\scriptsize $(\!u,\!\{\!v\!\}\!)$}
\rput(1.68,2.73){\scriptsize $(\!u,\!\{\!w\!\}\!)$}
\rput(0,2.76){\scriptsize $(v,1)$}
\rput(2.5,3.26){\scriptsize $(w,1)$}
\rput(4,3.26){\scriptsize $(z,1)$}
\rput(1,4.26){\scriptsize $(u,1)$}
\rput(-0.5,3.7){$S_2(G)$}
\endpspicture}

\vspace{18ex} \caption{Coronas of  $G=(K_2\cup K_1)+K_1$.}\label{rys1} \vspace{4ex}
\end{figure}

We now study relations between the domination number and the accurate domination number
of different coronas of a graph. Our first theorem specifies when these two numbers are equal for the $\cF$-corona $G\circ \cF $ of a graph $G$ and a~family $\cF $ of nonempty graphs indexed by the vertices of $G$.

\begin{thm}
\label{twierdzenie-corona-ogolnie}
If $G$ is a graph and $\cF = \{F_v \colon v\in V_G\}$ is a family of nonempty graphs indexed by the vertices of\, $G$, then the following holds. \\[-27pt]
\begin{enumerate}
\item[{\rm (1)}] $\gamma(G\circ \cF )= |V_G|$.
\item[{\rm (2)}] $\adom(G\circ \cF )= \gamma(G\circ \cF )$ if and only if $\gamma(F_v)>1$ for some vertex $v$ of $G$.
\item[{\rm (3)}] $|V_G|\le\adom(G\circ \cF )\le |V_G|+\min\{|V_{F_v}|\colon v\in V_G\}$.
\end{enumerate}
\end{thm}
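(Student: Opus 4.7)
The plan is to handle the three parts in order, using throughout the \emph{cluster} $C_v = \{(v,0)\} \cup (\{v\} \times V_{F_v})$ attached to each $v \in V_G$. The only edges of $G \circ \cF$ leaving $C_v$ are of the form $(u,0)(v,0)$ with $u \in N_G(v)$, and such an edge dominates only the vertex $(v,0)$ inside $C_v$. Since $F_v$ is nonempty, any dominating set must contain a vertex of $C_v$ in order to dominate a vertex of $\{v\} \times V_{F_v}$; summing over $v$ yields $\gamma(G \circ \cF) \ge |V_G|$, and the set $V_G \times \{0\}$ achieves this bound (each $(v,0)$ dominates all of $C_v$), proving part~(1).

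For part~(2) I would invoke Lemma~\ref{twierdzenie1}: $\adom(G \circ \cF) = \gamma(G \circ \cF)$ iff some $\gamma$-set meets every other $\gamma$-set. By part~(1), every $\gamma$-set $D$ contains exactly one vertex $d_v$ from each $C_v$. If $\gamma(F_v) = 1$ for every $v$, I would produce a disjoint $\gamma$-set $D' = \{d'_v : v \in V_G\}$ by setting $d'_v = (v, x_v)$ for some dominator $x_v$ of $F_v$ when $d_v = (v, 0)$, and $d'_v = (v, 0)$ otherwise; in either case $d'_v$ dominates $C_v$, so $D'$ is a $\gamma$-set disjoint from $D$, and no $\gamma$-set is accurate. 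Conversely, if $\gamma(F_{v_0}) > 1$ for some $v_0$, I would show that $V_G \times \{0\}$ is accurate: any hypothetical $|V_G|$-subset $D' \subseteq V_{G \circ \cF} \setminus (V_G \times \{0\})$ would satisfy $D' \cap C_{v_0} = \{(v_0, x)\}$ for a unique $x \in V_{F_{v_0}}$, and to dominate every vertex of $\{v_0\} \times V_{F_{v_0}}$ the single vertex $x$ would have to dominate $F_{v_0}$, contradicting $\gamma(F_{v_0}) > 1$.

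For part~(3) the lower bound is immediate from part~(1). For the upper bound, I would fix $v_0$ minimizing $|V_{F_v}|$ and take $D = (V_G \times \{0\}) \cup (\{v_0\} \times V_{F_{v_0}})$; this is a dominating set of size $|V_G| + |V_{F_{v_0}}|$, and accuracy is evident because $N_{G \circ \cF}[(v_0,0)] \subseteq D$, so \emph{no} subset of $V_{G \circ \cF} \setminus D$ can dominate $(v_0, 0)$. The one delicate step is constructing the disjoint $\gamma$-set $D'$ in part~(2) and verifying dominance in both subcases of the swap; the remainder reduces to tracking which cluster contains which vertex, and the cluster structure cleanly decouples the domination problem across the vertices of $G$.
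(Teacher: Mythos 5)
Your proposal is correct and follows essentially the same route as the paper: $V_G\times\{0\}$ as the canonical $\gamma$-set, per-cluster counting for the lower bound, the accuracy of $V_G\times\{0\}$ when some $\gamma(F_v)>1$, a disjoint "swapped" $\gamma$-set (the paper phrases this via universal-vertex sets $U_v$ and a system of representatives) when all $\gamma(F_v)=1$, and the set $(V_G\times\{0\})\cup(\{v\}\times V_{F_v})$ for the upper bound in (3). Your explicit justification that this last set is accurate (the closed neighborhood of $(v,0)$ lies inside it) is a detail the paper leaves unstated, but the argument is the same.
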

\begin{proof} (1) It is obvious that $V_G\times \{0\}$ is a minimum dominating set of $G\circ \cF $ and therefore $\gamma(G\circ \cF )= |V_G\times \{0\}|=|V_G|$.

(2)  If $\gamma(F_v)>1$ for some vertex $v$ of $G$, then
\[
\gamma(G\circ \cF - (V_G\times \{0\}))= \sum_{v\in V_G}\gamma((G\circ \cF )[\{v\}\times V_{F_v}])= \sum_{v\in V_G}\gamma(F_v)>|V_G|= |V_G\times \{0\}|
\]
and this proves that no subset of $V_{G\circ \cF }-(V_G\times \{0\})$ of cardinality $|V_G\times \{0\}|$ is a dominating set of $G\circ \cF $. Consequently $V_G\times \{0\}$ is a minimum accurate dominating set of $G\circ \cF $ and therefore $\adom(G\circ \cF )=\gamma(G\circ \cF )$.

Assume now that $G$ and $\cF $ are such that $\adom(G\circ \cF )=\gamma(G\circ \cF )$. We claim that $\gamma(F_v)>1$ for some vertex $v$ of $G$. Suppose, contrary to our claim, that $\gamma(F_v)=1$ for every vertex $v$ of $G$. Then the set $U_v=\{x\in V_{F_v}\colon N_{F_v}[x]=V_{F_v}\}$, the set of universal vertices of $F_v$, is nonempty for every $v\in V_G$. Now, let $D$ be any minimum dominating set of $G\circ \cF $. Then, $|D|= \gamma(G\circ \cF ) = |V_G\times \{0\}|= |V_G|$, $|D\cap (\{(v,0)\} \cup (\{v\}\times U_v))|=1$, and the set $(\{(v,0)\} \cup (\{v\}\times U_v))-D$ is nonempty for every $v\in V_G$. Now, if $\overline{D}$ is a~system of representatives of the family $\{(\{(v,0)\} \cup (\{v\}\times U_v))-D\colon v\in V_G\}$, then $\overline{D}$ is a minimum dominating set of $G\circ \cF $. Since $\overline{D}$ and $D$ are disjoint, $D$ is not an accurate dominating set of $G\circ \cF $. Consequently, no minimum dominating set of $G\circ \cF $ is an accurate dominating set and therefore $\gamma(G\circ \cF ) <\adom(G\circ \cF )$, a~contradiction.

(3) The lower bound is obvious as $|V_G|=\gamma(G\circ \cF )\le\adom(G\circ \cF )$. Since $(V_G\times \{0\})\cup (\{v\}\times V_{F_v})$ is an accurate dominating set of $G\circ \mathcal{F}$ (for every $v\in V_G$), we also have the inequality $\adom(G\circ \cF )\le |V_G|+\min\{|V_{F_v}|\colon v\in V_G\}$. This completes the proof of Theorem~\ref{twierdzenie-corona-ogolnie}.
\end{proof}

As a consequence of Theorem~\ref{twierdzenie-corona-ogolnie}, we have the following result.

\begin{cor}
\label{twierdzenie-corona2}
If $G$ is a graph, then
$\adom(G\circ K_1)= \gamma(G\circ K_1)+1= |V_G|+1$. \end{cor}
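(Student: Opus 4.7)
The plan is to derive this directly from the three parts of Theorem~\ref{twierdzenie-corona-ogolnie} by specializing the family $\cF$ to the constant family $\cF = \{F_v \colon v \in V_G\}$ with $F_v = K_1$ for every $v \in V_G$; with this choice, the corresponding $\cF$-corona is exactly $G \circ K_1$.

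First, I would invoke part (1) of Theorem~\ref{twierdzenie-corona-ogolnie} to obtain $\gamma(G \circ K_1) = |V_G|$. Next, observe that $\gamma(F_v) = \gamma(K_1) = 1$ for every $v \in V_G$, so the hypothesis ``$\gamma(F_v) > 1$ for some $v$'' of part~(2) fails. Since part~(2) is an ``if and only if,'' its failure forces $\adom(G \circ K_1) \ne \gamma(G \circ K_1)$, and combining this with the general inequality $\gamma(G \circ K_1) \le \adom(G \circ K_1)$ yields the strict inequality $\adom(G \circ K_1) \ge |V_G| + 1$.

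Finally, I would apply part (3) of Theorem~\ref{twierdzenie-corona-ogolnie}: since $|V_{F_v}| = 1$ for every $v \in V_G$, the upper bound gives $\adom(G \circ K_1) \le |V_G| + \min\{|V_{F_v}| \colon v \in V_G\} = |V_G| + 1$. Sandwiching with the lower bound from the previous step produces $\adom(G \circ K_1) = |V_G| + 1 = \gamma(G \circ K_1) + 1$, as required.

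There is no real obstacle here beyond correctly reading off the specialization $F_v = K_1$; the whole argument is a one-line instantiation of Theorem~\ref{twierdzenie-corona-ogolnie}, so no separate construction of minimum (accurate) dominating sets is needed.
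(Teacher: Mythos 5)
Your proposal is correct and follows essentially the same route as the paper: the lower bound $\adom(G\circ K_1)\ge |V_G|+1$ comes from parts (1) and (2) of Theorem~\ref{twierdzenie-corona-ogolnie}, and the upper bound $|V_G|+1$ is the specialization of part (3) to $F_v=K_1$ (the paper just re-exhibits the witness set $(V_G\times\{0\})\cup\{(v,1)\}$ rather than citing part (3), which is the same argument).
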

\begin{proof} Since $\gamma(K_1)=1$, it follows from Theorem \ref{twierdzenie-corona-ogolnie} that  $\adom(G\circ K_1)\ge \gamma(G\circ K_1)+1= |V_G|+1$. On the other hand the set $(V_G\times \{0\})\cup \{(v,1)\}$ is an
accurate dominating set of $G\circ K_1$ and therefore $\adom(G\circ K_1)\le
|(V_G\times \{0\})\cup \{(v,1)\}| = |V_G|+1$. Consequently, $\adom(G\circ K_1)= \gamma(G\circ K_1)= |V_G|+1$. \end{proof}

From Theorem \ref{twierdzenie-corona-ogolnie} we know that $\adom(G\circ \cF )= \gamma(G\circ \cF )=|V_G|$ if and only if the family $\cF $ is such that $\gamma(F_v)>1$ for some $F_v\in \cF $, but we do not know any general formula for
$\adom(G\circ \cF )$ if $\gamma(F_v)=1$ for every $F_v\in \cF $.
The following theorem shows a formula for the domination number and general bounds for
the accurate domination number of a $\cP $-corona of a graph.

\begin{thm} \label{tw-general-corona}
If  $G$ is a graph and $\cP =\{\mathcal{P}(v) \colon v\in V_G\}$ is a family of partitions of the vertex neighborhoods of $G$, then the following holds. \\[-27pt]
\begin{enumerate}
\item[{\rm (1)}] $\gamma(G\circ \mathcal{P})=|V_G|$.
\item[{\rm (2)}] $\adom(G\circ \mathcal{P})\ge |V_G|$.
\item[{\rm (3)}] $\adom(G\circ \mathcal{P})\le |V_G|+\min\{\min\{|\cP (v)|\colon v\in V_G\}, 1+\min\{|A|\colon A\!\in \!\bigcup_{v\in V_G}\!\cP (v)\}\}$.
\end{enumerate}
\end{thm}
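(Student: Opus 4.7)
For part (1), I will verify the two bounds separately. The upper bound $\gamma(G\circ \mathcal{P}) \le |V_G|$ follows by observing that $V_G \times \{1\}$ is dominating, since every ``second-layer'' vertex $(v,A)$ is adjacent to $(v,1)$. For the matching lower bound, I will note that the closed neighborhoods $B_v := \{(v,1)\} \cup (\{v\} \times \mathcal{P}(v))$ for $v \in V_G$ are pairwise disjoint, and every dominating set must meet each $B_v$ in order to dominate $(v,1)$. Part (2) is then immediate from (1) together with the trivial bound $\gamma(H) \le \adom(H)$.

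For part (3), the strategy is to exhibit two accurate dominating sets, one realising each term of the minimum. For the term $\min\{|\mathcal{P}(v)| : v \in V_G\}$, I choose $v^*$ minimising $|\mathcal{P}(v^*)|$ and take
$$D_1 := (V_G \times \{1\}) \cup (\{v^*\} \times \mathcal{P}(v^*)).$$
Then $|D_1| = |V_G| + |\mathcal{P}(v^*)|$, and $D_1$ is dominating because it contains $V_G \times \{1\}$. Accuracy will follow from the observation that the entire closed neighborhood $B_{v^*}$ of $(v^*,1)$ is contained in $D_1$, so no subset of $V_{G\circ \mathcal{P}} \setminus D_1$ can dominate $(v^*,1)$.

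For the second term $1 + \min\{|A| : A \in \bigcup_{v \in V_G}\mathcal{P}(v)\}$, I choose a minimum-cardinality part $A^* \in \mathcal{P}(v^*)$. Since $u \in A^* \subseteq N_G(v^*)$ forces $v^* \in N_G(u)$, the partition $\mathcal{P}(u)$ contains a unique class $A_u$ with $v^* \in A_u$, and I set
$$D_2 := (V_G \times \{1\}) \cup \{(v^*, A^*)\} \cup \{(u, A_u) : u \in A^*\}.$$
Using that $v^* \notin A^*$, these vertices are all distinct, so $|D_2| = |V_G| + 1 + |A^*|$, and $D_2$ is clearly dominating. For accuracy, the key observation is that the closed neighborhood of $(v^*, A^*)$ in $G\circ \mathcal{P}$ is exactly $\{(v^*,1), (v^*, A^*)\} \cup \{(u, A_u) : u \in A^*\} \subseteq D_2$, so no subset of $V_{G\circ \mathcal{P}}\setminus D_2$ can dominate $(v^*, A^*)$. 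The main obstacle is settling on the correct form of $D_2$: one has to read the neighborhood of a second-layer vertex directly from the partition-adjacency rule, and then recognise that removing this entire neighborhood from the available vertices is precisely what enforces the accuracy condition.
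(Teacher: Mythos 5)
Your proposal is correct and follows essentially the same route as the paper: the lower bound via the pairwise disjoint closed neighborhoods of the vertices $(v,1)$, and the upper bound by exhibiting the two accurate dominating sets $(V_G\times\{1\})\cup N_{G\circ \mathcal{P}}[(v^*,1)]$ and $(V_G\times\{1\})\cup N_{G\circ \mathcal{P}}[(v^*,A^*)]$, whose accuracy follows because each contains the full closed neighborhood of a vertex. Your sets $D_1$ and $D_2$ are exactly these, just written out explicitly via the partition-adjacency rule.
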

\begin{proof}  It follows from the definition of $G\circ \mathcal{P}$ that $V_G\times \{1\}$ is a dominating set of $G\circ \mathcal{P}$, and therefore $\gamma(G \circ \mathcal{P})\le |V_G\times \{1\}|=|V_G|$. On the other hand, let $D \in \cAg(G\circ \mathcal{P})$. Then $D \cap N_{G\circ \mathcal{P}}[(v,1)] \ne  \emptyset$ for every $v\in V_G$, and, since the sets $N_{G\circ \mathcal{P}}[(v,1)]$ form a partition of $V_{G\circ \mathcal{P}}$, we have
\[
\gamma(G\circ \mathcal{P})= |D|= |\bigcup_{v\in V_G} \left(D\cap N_{G\circ \mathcal{P}}[(v,1)]\right)|= \sum_{v\in V_G}|D \cap N_{G\circ \mathcal{P}}[(v,1)]| \ge |V_G|.
\]
Consequently, we have $|V_G|= \gamma( G\circ \cP )\le \adom(G\circ \mathcal{P})$, which proves (1) and (2).

From the definition of $G\circ \mathcal{P}$ it also follows that each of the sets $(V_G\times \{1\})\cup N_{G\circ \mathcal{P}}[(v,1)]$ (for every $v\in V_G$) and $(V_G\times \{1\})\cup N_{G\circ \mathcal{P}}[(v,A)]$ (for every $v\in V_G$ and $A\in \cP (v)$) is an accurate dominating set of $G\circ \mathcal{P}$. Hence,
\[
\begin{array}{lcl}
|(V_G\times \{1\})\cup N_{G\circ \mathcal{P}}[(v,1)]|
& = & |V_G\times \{1\}|+ |N_{G\circ \mathcal{P}}((v,1))| \\
& = & |V_G|+|\cP (v)| \\
& \ge & |V_G|+\min\{ |\cP (v)|\colon v\in V_G\} \\
& \ge & \adom(G\circ \mathcal{P}),
\end{array}
\]
and similarly
\[
\begin{array}{lcl}
|(V_G\times \{1\})\cup N_{G\circ \mathcal{P}}[(v,A)]|
& = &  |(V_G\times \{1\})\cup \{(v,1)\} \cup N_{G\circ \mathcal{P}}((v,A))| \\
& = & |V_G|+1+ |A| \\
& \ge & |V_G|+1+ \min\{|A|\colon A\in \bigcup_{v\in V_G}\cP (v)\}.
\end{array}
\]
Therefore,
\[
\adom(G\circ \mathcal{P})\le |V_G|+\min\{\min\{|\cP (v)|\colon v\in V_G\}, 1+\min\{|A|\colon A\in \bigcup_{v\in V_G}\cP (v)\}\}.
\]
This completes the proof of Theorem~\ref{tw-general-corona}. \end{proof}

We do not know all the pairs $(G,\cP )$ achieving equality in the upper bound for
the accurate domination number of a $\cP $-corona of a graph, but Theorem~\ref{tw-2-subdivision} and Corollaries~\ref{wniosek3} and~\ref{wniosek4} show that the bounds in Theorem \ref{tw-general-corona} are best possible. The next theorem also shows that the domination number and the accurate domination number of a $2$-subdivided graph are easy to compute.

\begin{thm} \label{tw-2-subdivision} If $G$ is a connected graph, then the following holds. \\[-27pt]
\begin{enumerate}
\item[{\rm (1)}] $\gamma(S_2(G))=|V_G|$. \2
\item[{\rm (2)}] $|V_G|\le \adom(S_2(G))\le |V_G|+2$. \2
\item[{\rm (3)}] $\adom(S_2(G)) = \left\{\begin{array}{rl} |V_G|+2,& \mbox{if \hspace{-0.3ex} $G$ is a cycle,}\\[1ex] |V_G|+1, & \mbox{if \hspace{-0.3ex} $G=K_2$,}\\[1ex] |V_G|,& \mbox{otherwise.}\end{array}\right.$
\end{enumerate}
\end{thm}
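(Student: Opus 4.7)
My plan starts from the observation that $S_2(G)$ coincides with the $\cP$-corona $G\circ\cP$ for the singleton-partition family $\cP(v)=\{\{u\}:u\in N_G(v)\}$. Theorem~\ref{tw-general-corona} then gives (1) and the bounds $|V_G|\le\adom(S_2(G))\le|V_G|+2$ in (2) at once, since every partition class is a singleton and hence $1+\min|A|=2$. For (3), the cases $G=C_n$ and $G=K_2$ follow immediately from the identifications $S_2(C_n)\cong C_{3n}$ and $S_2(K_2)\cong P_4$ combined with Observation~\ref{formula}. The bulk of the work is to prove $\adom(S_2(G))=|V_G|$ whenever $G$ is connected, $G\ne K_2$, and $G$ is not a cycle, which I would do by exhibiting a minimum accurate dominating set.

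When $G$ has no leaves, $\delta(G)\ge 2$, and since $G$ is connected but not a cycle, $|E_G|>|V_G|$. Here I would take $D:=V_G\subseteq V_{S_2(G)}$ (a minimum dominating set). The key observation is that $\{N_{S_2(G)}[v]:v\in V_G\}$ partitions $V_{S_2(G)}$, so any $|V_G|$-element dominating set $D'$ disjoint from $V_G$ must consist of exactly one vertex $(v,vu_v)$ from each block. The condition that every subdivision vertex $(u,vu)$ is dominated by $D'$ then forces: for each edge $vu$ of $G$, $u_v=u$ or $u_u=v$. These $|V_G|$ pointers collectively cover at most $|V_G|$ distinct edges of $G$, contradicting $|E_G|>|V_G|$. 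Hence $D=V_G$ is accurate.

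When $G$ has at least one leaf, I would take $D:=(V_G\setminus L_G)\cup\{(l,s_l l):l\in L_G\}$, where $s_l$ denotes the unique support of $l$. A direct check shows $|D|=|V_G|$ and $D$ dominates $S_2(G)$. For accuracy, suppose $D'\subseteq V_{S_2(G)}\setminus D$ were a minimum dominating set. Each leaf $l$ must lie in $D'$ (since its only neighbor $(l,s_l l)$ in $S_2(G)$ belongs to $D$), and cardinality then forces $D'=L_G\cup\{(v,vu_v):v\in V_G\setminus L_G\}$ for some $u_v\in N_G(v)$. Domination of $(v,vl)$ for any leaf-neighbor $l$ of $v$ forces $u_v=l$, so each support is adjacent to exactly one leaf; the same ``$u_u=v$'' propagation along every internal edge $uv$ with $u\ne u_v$ shows supports are independent and each non-support internal has at most one support neighbor. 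Defining the auxiliary digraph $H$ on the internals $I:=V_G\setminus L_G$ with arcs $u\to u_u$ for each $u\in I_0:=I\setminus S_G$, the in-degree in $H$ of every internal $v$ is at least $|N_I(v)|$ if $v\in S_G$ and at least $|N_I(v)|-1$ otherwise, while the total number of arcs is $|I_0|$. Summing yields $|E_{G[I]}|\le|I_0|$, and substituting $|E_{G[I]}|=|E_G|-|L_G|$ and $|I_0|=|V_G|-|L_G|-|S_G|$ simplifies this to $|E_G|+|S_G|\le|V_G|$. Combined with $|E_G|\ge|V_G|-1$ and $|S_G|\ge 1$, this forces $G$ to be a tree with $|S_G|=1$; but then the unique support is the sole neighbor of every leaf, giving $|L_G|=1$ and contradicting the fact that every tree of order at least two has at least two leaves. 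The main obstacle is precisely this double-counting step, which must simultaneously rule out trees of order at least three, unicyclic non-cycles, and graphs with two or more cycles.
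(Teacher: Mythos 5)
Your proposal is correct in substance, but for the main case of (3) it takes a genuinely different route from the paper's. Parts (1), (2) and the cycle/$K_2$ cases agree with the paper (the paper proves the upper bound in (2) by directly exhibiting the accurate dominating set $V_G\cup\{(v,vu),(u,vu)\}$ rather than quoting Theorem~\ref{tw-general-corona}(3), but that is the same set). The divergence is in showing $\adom(S_2(G))=|V_G|$ in the ``otherwise'' case. The paper splits on $|E_G|$ versus $|V_G|$: for $|E_G|>|V_G|$ it counts the $|E_G|$ components of $S_2(G)-V_G$ and uses $\gamma(H-D)\ge\kappa(H-D)$; for unicyclic non-cycles it exhibits $(V_G\setminus\{v\})\cup\{(v,vu)\}$ and again counts components; and for trees of order at least three it observes that $S_2(G)$ is a tree that is not a corona graph and invokes Theorem~\ref{t:trees}. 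You instead split on whether $G$ has a leaf and in both subcases analyse directly what a disjoint $|V_G|$-element dominating set $D'$ would have to be: the block partition $\{N_{S_2(G)}[v]\}_{v\in V_G}$ forces one ``pointer'' $u_v$ per vertex, the edge $uv$ forces $u_u=v$ or $u_v=u$, and your edge-covering count (leafless case) and in-degree double count in the auxiliary digraph $H$ (leafy case) each yield a numerical contradiction; I checked the forced equalities at supports, the independence of supports, the in-degree bounds, and the final reduction to a tree with a single support and hence a single leaf, and the argument is sound. What your approach buys is a self-contained, uniform proof that never appeals to the tree characterization Theorem~\ref{t:trees}, the heaviest tool in the paper's version; what it costs is length, since the component-counting argument dispatches each of the paper's cases in a line or two. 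One small repair is needed: your case split omits $G=K_1$, which has no leaf yet does not satisfy $\delta(G)\ge 2$ (and for which the bound of Theorem~\ref{tw-general-corona}(3) degenerates because $\bigcup_{v}\cP(v)$ is empty); this requires only the observation, made explicitly in the paper, that $S_2(K_1)=K_1$ and $\adom(K_1)=1=|V_{K_1}|$.
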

\begin{proof} The statement (1) follows from Theorem \ref{tw-general-corona}\,(1).

(2) The inequalities $|V_G|\le \adom(S_2(G))\le |V_G|+2$ are obvious if $G=K_1$. Thus assume that $G$ is a connected graph of order at least two. Let $u$ and $v$ be adjacent vertices of $G$. Then, $V_G\cup \{(v,vu),(u,vu)\}$ is an accurate dominating set of $S_2(G)$ and we have $|V_G|=\gamma(S_2(G))\le \adom(S_2(G))\le |V_G\cup \{(v,vu),(u,vu)\}|=|V_G|+2$.

(3) The connectivity of $G$ implies that there are three cases to consider.

\emph{Case 1. $|E_G|>|V_G|$.} In this case $S_2(G)-V_G$ has $|E_G|$ components and therefore no $|V_G|$-element subset of $V_{S_2(G)} \setminus V_G$ dominates $S_2(G)$. Hence, $V_G$ is an accurate dominating set of $S_2(G)$ and $\adom(S_2(G))= |V_G|$.

\emph{Case 2. $|E_G|=|V_G|$.} In this case, $G$ is a unicyclic graph. First, if $G$ is a cycle, say $G=C_n$, then $S_2(G)=C_{3n}$ and $\adom(S_2(G))= \adom(C_{3n}) =n+2=|V_G|+2$ (see Proposition 3 in \cite{KulliKattimani}). Thus assume that $G$ is a unicyclic graph which is not a cycle. Then $G$ has a leaf, say $v$. Now, if $u$ is the only neighbor of $v$, then $(V_G \setminus \{v\}) \cup \{(v,vu)\}$ is a minimum  dominating set of $S_2(G)$. Since $S_2(G)-((V_G \setminus \{v\})\cup \{(v,vu)\})$ has $|V_G|+1$ components, $(V_G \setminus \{v\})\cup \{(v,vu)\}$ is a minimum accurate dominating set of $S_2(G)$ and $\adom(S_2(G))= |(V_G \setminus \{v\})\cup \{(v,vu)\}|= |V_G|$.

\emph{Case 3.  $|E_G|=|V_G|-1$.} In this case, $G$ is a tree. Now, if $G=K_1$, then $S_2(G) =K_1$ and  $\adom(S_2(G))= \adom(K_1)=1=|V_G|$. If $G=K_2$, then $S_2(G)=P_4$ and  $\adom(S_2(G))= \adom(P_4)=3=2+1=|V_G|+1$. Finally, if $G$ is a tree of order at least three, then the tree $S_2(G)$ is not a~corona graph and by (1) and Theorem~\ref{t:trees} we have $\adom(S_2(G))= \gamma(S_2(G))= |V_G|$. \end{proof}

As a consequence of Theorem~\ref{tw-2-subdivision}, we have the following results.

\begin{cor}
\label{wniosek3}
If $T$ is a tree and $\cP =\{\mathcal{P}(v) \colon v\in V_T\}$ is a family of partitions of the vertex neighborhoods of $T$, then
\[
\adom(T\circ \mathcal{P}) = \left\{
\begin{array}{cl}
|V_T|+1 & \mbox{if $|\cP (v)|=1$ for every $v \in V_T$} \2 \\
|V_T| & \mbox{if $|\cP (v)|>1$ for some $v \in V_T$.} \end{array}
\right.
\]
\end{cor}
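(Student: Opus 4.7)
My plan is to show that $T\circ\cP$ is itself a tree and then invoke Theorem~\ref{t:trees}. For the tree property, a direct count gives $|V_T|+\sum_{v\in V_T}|\cP(v)|$ vertices; the edges split into the $\sum_{v}|\cP(v)|$ ``spoke'' edges $(v,1)(v,A)$ together with, for each $uv\in E_T$, the single edge joining $(v,A)$ and $(u,B)$ where $A\in\cP(v)$ is the block containing $u$ and $B\in\cP(u)$ is the block containing $v$, giving $\sum_{v}|\cP(v)|+|E_T|$ edges in total. That is one fewer than the number of vertices. Connectivity is immediate: each $(v,A)$ is adjacent to $(v,1)$, and the edge $(v,A)(u,B)$ lets any edge of $T$ be lifted to a short path between $(v,1)$ and $(u,1)$. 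Hence $T\circ\cP$ is a tree.

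Next I would identify its leaves. The degree of $(v,1)$ is $|\cP(v)|$, while the degree of $(v,A)$ is $1+|A|\ge 2$, since partition blocks are nonempty. Hence the leaves of $T\circ\cP$ are exactly the vertices $(v,1)$ with $|\cP(v)|=1$.

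Now the two cases of the statement separate. If $|\cP(v)|=1$ for every $v\in V_T$, then necessarily $\cP(v)=\{N_T(v)\}$ for every $v$, so the isomorphism $T\circ\cP\cong T\circ K_1$ already noted in the paragraph defining the $\cP$-corona, combined with Corollary~\ref{twierdzenie-corona2}, yields $\adom(T\circ\cP)=|V_T|+1$. If instead $|\cP(v^*)|>1$ for some $v^*\in V_T$, then $(v^*,1)$ is a non-leaf of $T\circ\cP$, and each of its neighbors $(v^*,A)$ has degree $1+|A|\ge 2$, so $(v^*,1)$ is a non-leaf with no leaf neighbor. In particular, $T\circ\cP$ is not a corona graph, so Theorem~\ref{t:trees} gives $\adom(T\circ\cP)=\gamma(T\circ\cP)$, and Theorem~\ref{tw-general-corona}(1) supplies $\gamma(T\circ\cP)=|V_T|$.

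I do not expect a serious obstacle; the only point to remember is that each partition block is nonempty, which is what forces the vertices $(v,A)$ to never be leaves and allows a single $v^*$ with $|\cP(v^*)|>1$ to destroy the corona-graph structure of $T\circ\cP$.
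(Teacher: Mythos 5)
Your proof is correct and takes essentially the same route as the paper's: in the first case reduce to Corollary~\ref{twierdzenie-corona2} via the isomorphism $T\circ\cP\cong T\circ K_1$, and in the second case observe that $T\circ\cP$ is a tree that is not a corona graph and apply Theorem~\ref{t:trees} together with Theorem~\ref{tw-general-corona}(1). The only difference is that you explicitly verify the tree property and the non-corona property (via the vertex/edge count and the degree computation showing no $(v,A)$ is a leaf), details the paper leaves implicit.
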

\begin{proof} If $|\cP (v)|=1$ for every $v \in V_T$, then $T\circ \mathcal{P}= T\circ K_1$ and the result follows from Corollary \ref{twierdzenie-corona2}. If $|\cP (v)|>1$ for some $v \in V_T$, then the tree $T\circ \mathcal{P}$ is not a~corona and the result follows from Theorem~\ref{t:trees} and Theorem~\ref{tw-general-corona}\,(1).
\end{proof}

\begin{cor} \label{wniosek4}
For $n \ge 3$, if $\cP =\{\mathcal{P}(v) \colon v\in V_{C_n}\}$ is a family of partitions of the vertex neighborhoods of $C_n$, then
\[
\adom(C_n\circ \mathcal{P}) = \left\{
\begin{array}{cl}
n+1 & \mbox{if $|\mathcal{P}(v)|=1$ for every $v\in V_{C_n}$} \2 \\
n+2 & \mbox{if $|\mathcal{P}(v)|=2$ for every $v\in V_{C_n}$} \2 \\
n & \mbox{otherwise.}
\end{array}
\right.
\]
\end{cor}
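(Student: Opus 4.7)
Since every vertex of $C_n$ has degree $2$, every partition $\mathcal{P}(v)$ has either one or two blocks, so the three cases in the statement are exhaustive. The first two reduce immediately to earlier results. When $|\mathcal{P}(v)|=1$ for all $v$, each partition is $\{N_{C_n}(v)\}$, so $C_n\circ\mathcal{P}\cong C_n\circ K_1$, giving $\adom = n+1$ by Corollary~\ref{twierdzenie-corona2}. When $|\mathcal{P}(v)|=2$ for all $v$, each partition consists of two singletons and $C_n\circ\mathcal{P}\cong S_2(C_n)$, giving $\adom = n+2$ by Theorem~\ref{tw-2-subdivision}(3).

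For the third case let $W_1=\{v:|\mathcal{P}(v)|=1\}$ and $W_2=\{v:|\mathcal{P}(v)|=2\}$; both are nonempty. The lower bound $\adom(C_n\circ\mathcal{P}) \ge n$ is Theorem~\ref{tw-general-corona}(2). For the matching upper bound, I would exhibit the explicit set
\[
D \,=\, \{(v,1) : v\in W_2\} \cup \{(v,A_v) : v\in W_1\},
\]
where $A_v$ denotes the unique block of $\mathcal{P}(v)$ when $v\in W_1$. Directly from the definition of $G\circ\mathcal{P}$, one has $|D|=n$ and $D$ dominates $C_n\circ\mathcal{P}$.

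The main obstacle is verifying that $D$ is accurate. Suppose for contradiction that some $n$-element $D' \subseteq V_{C_n\circ\mathcal{P}} \setminus D$ dominates. For each $v\in W_1$ the vertex $(v,1)$ has its unique neighbor $(v,A_v)$ in $D$, so $(v,1)\in D'$; for each $v\in W_2$ the vertex $(v,1)\in D$ forces $D'\cap(\{v\}\times\mathcal{P}(v))\neq\emptyset$. Since $|D'|=n=|W_1|+|W_2|$ these counts are tight, and for each $v\in W_2$ there is a unique $\sigma_v\in N_{C_n}(v)$ with $(v,\{\sigma_v\})\in D'$. Writing $\tau_v$ for the other neighbor of $v$ in $C_n$, the remaining vertex $(v,\{\tau_v\})$ must still be dominated; its only candidate dominator is $(\tau_v,B)$ with $v\in B$. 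If $\tau_v\in W_1$ then $(\tau_v,B)=(\tau_v,A_{\tau_v})\in D$ and domination fails, so necessarily $\tau_v\in W_2$ and $\sigma_{\tau_v}=v$. But then the iteration $v\mapsto\tau_v$ stays inside $W_2$ while always advancing to the opposite neighbor in $C_n$, so starting from any $v_0\in W_2$ it walks consistently in one direction around the cycle and visits every vertex, forcing $W_2=V_{C_n}$ and contradicting $W_1\neq\emptyset$. Hence $D$ is accurate and $\adom(C_n\circ\mathcal{P})=n$.
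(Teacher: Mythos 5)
Your proof is correct, and it coincides with the paper's in almost every respect: the first two cases are handled by the same reductions to Corollary~\ref{twierdzenie-corona2} and Theorem~\ref{tw-2-subdivision}, and in the mixed case you construct exactly the same minimum dominating set $D$ (block-vertices over the one-block columns, apex vertices $(v,1)$ over the two-block columns). The only genuine divergence is how you certify that $D$ is accurate. The paper counts components: grouping the vertices of $C_n$ into maximal runs according to $|\cP(v)|$, it shows $\kappa(C_n\circ\cP - D) > n$, whence $\gamma(C_n\circ\cP - D)\ge\kappa(C_n\circ\cP - D)>n$ and no $n$-element subset of the complement can dominate --- the same mechanism as condition (2) of Theorem~\ref{t:trees}. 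You instead run a direct forcing argument on a hypothetical disjoint $n$-element dominating set $D'$: the tight count pins down one representative per column, and the requirement that the unchosen vertex $(v,\{\tau_v\})$ be dominated propagates along a non-backtracking walk in $C_n$ that would force every vertex into $W_2$, contradicting $W_1\neq\emptyset$. Both arguments are sound; the paper's component count is shorter and reuses an idea already present in the tree characterization, while your forcing argument is self-contained and makes visible \emph{why} a disjoint dominating set cannot exist (it would have to orient the whole cycle), at the cost of a more delicate case analysis.
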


\noindent \begin{proof} If $|\cP (v)|=1$ for every $v \in V_{C_n}$, then ${C_n}\circ \mathcal{P}= {C_n}\circ K_1$. Thus, by Theorem \ref{twierdzenie-corona2}, we have $\adom(C_n \circ \mathcal{P})= \adom(C_n \circ K_1)= \gamma(C_n \circ K_1)= |V_{C_n}|+1=n+1$.

If $|\cP (v)|>1$  (and therefore $|\cP (v)|=2$) for every $v \in V_{C_n}$,
then ${C_n}\circ \mathcal{P}= S_2(C_n)= C_{3n}$. Now, since $\adom(C_{3n})= n+2$ (as it was observed in \cite{KulliKattimani}), we have $\adom({C_n}\circ \mathcal{P})=\adom(C_{3n})= n+2$.

Finally assume that there are vertices $u$ and $v$ in $C_n$ such that $|\cP (v)|=1$  and $|\cP (u)|=2$. Then the sets
\[
V_{C_n}^1=\{x\in V_{C_n}\colon |\cP (x)|=1\} \hspace*{0.5cm} \mbox{and} \hspace*{0.5cm} V_{C_n}^2=\{y\in V_{C_n}\colon |\cP (y)|=2\}
\]
form a partition of $V_{C_n}$.
Without loss of generality we may assume that $x_1,x_2,\ldots,x_k$, $y_1,y_2,\ldots, y_{\ell}, \ldots$, $z_1, z_2,\ldots, z_p, t_1, t_2,\ldots, t_q$ are the consecutive vertices of $C_n$, where \[
x_1,x_2,\ldots, x_k \in V_{C_n}^1, y_1,y_2,\ldots, y_{\ell}\in V_{C_n}^2, \ldots, z_1, z_2,\ldots, z_p\in V_{C_n}^1,  t_1, t_2,\ldots, t_q\in V_{C_n}^2,
\]
and $k+{\ell}+\ldots+p+q=n$. It is easy to observe that  $D=\{(x_i,N_{C_n}(x_i))\colon i=1,\ldots,k\}\cup \{(y_j,1)\colon j=1,\ldots,{\ell}\}\cup \cdots \cup \{(z_i,N_{C_n}(z_i))\colon i=1,\ldots, p\}\cup \{(t_j,1)\colon j=1,\ldots,q\}$ is a dominating set of ${C_n}\circ \mathcal{P}$. Since the set $D$ is of cardinality $n=|V_{C_n}|$ and $n= \gamma({C_n}\circ \mathcal{P})$ (by Theorem \ref{tw-general-corona}\,(1)), $D$ is a minimum dominating set of ${C_n}\circ \mathcal{P}$. In addition, since ${C_n}\circ \mathcal{P}-D$ has $k+(2+({\ell}-1))+\ldots
+p+(2+(q-1))> k+{\ell}+\ldots+p+q=n$ components, that is, since $\kappa({C_n}\circ \mathcal{P}-D)>n$, no $n$-element subset of $V_{{C_n}\circ \mathcal{P}} \setminus D$ is a dominating set of ${C_n}\circ \mathcal{P}$. Thus, $D$ is an accurate dominating set of  ${C_n}\circ \mathcal{P}$ and therefore $\gamma({C_n}\circ \mathcal{P})=n$. \end{proof}

\section{Closing open problems}

We close with the following list of open problems that we have yet to settle.

\begin{Problem} \rm Find a formula for the accurate domination number  $\adom(G\circ \cF )$ of the $\cF $-corona of a graph $G$ depending only on the family $\cF =\{F_v\colon v\in V_G\}$ such that $\gamma(F_v)=1$ for every $v\in V_G$. \end{Problem}

\begin{Problem} \rm Characterize the graphs $G$ and the families $\cP =\{\cP (v) \colon v\in V_G\}$ for which $\adom(G\circ \mathcal{P})=  |V_G|+\min\{ \min\{|\cP (v)|\colon v\in V_G\}, 1+\min\{|A|\colon A\!\in \!\bigcup_{v\in V_G}\!\cP (v)\}\}$. \end{Problem}

\begin{Problem} \rm It is a natural question to ask if there exists a nonnegative integer $k$ such that $\adom(G\circ \mathcal{P})\le |V_G|+k$ for every graph $G$ and every family $\cP =\{\mathcal{P}(v) \colon v\in V_G\}$ of partitions of the vertex neighborhoods of $G$. \end{Problem}

\end{document}